\documentclass[12pt,reqno,fleqn]{amsart}  
\usepackage{tikz}
\usepackage[top=1.5in, bottom=1.5in, left=1.25in, right=1.25in]	{geometry}
\usepackage{amsmath,amstext,amsthm,amssymb,amsxtra}
\usepackage[T1]{fontenc}

\usepackage{euscript}
\usepackage{mathrsfs}

\usepackage{mathtools}
\mathtoolsset{showonlyrefs,showmanualtags}

\usepackage{hyperref} 
\hypersetup{
	colorlinks=true,       
	linkcolor=blue,          
	citecolor=magenta,        
	filecolor=magenta,      
	urlcolor=cyan           
}

\usepackage[msc-links]{amsrefs}


\theoremstyle{plain} 
\newtheorem{lemma}{Lemma}[section]
\newtheorem{proposition}{Proposition}[section] 
\newtheorem{theorem}{Theorem} [section]
\newtheorem{corollary}{Corollary} [section]

\newtheorem{OldTheorem}{Theorem}

\theoremstyle{remark}
\newtheorem{remark}{Remark}

\def\x{\ensuremath{\textbf x}}

\def\n{\ensuremath{\textbf n}}

\def\spec{{\rm spec}}

\def\ZB{\ensuremath{\mathscr B}}

\def\zB{\ensuremath{\mathbb B}}
\def\ZA{\ensuremath{\mathscr A}}
\def\ZZ{\ensuremath{\mathbb Z}}
\def\ZI{\ensuremath{\mathbf 1}}
\def\ZN{\ensuremath{\mathbb N}}

\def\kappa{\ensuremath{\mathcal K}}
\def\ZR{\ensuremath{\mathbb R}}

\def\ZD{\ensuremath{\mathscr D}}

\def\ZT{\ensuremath{\mathbb T}}
\def\zT{\ensuremath{\mathscr T}}

\theoremstyle{plain}
\newtheorem{definition}{Definition}[section]

\numberwithin{equation}{section}

\def\md#1#2\emd{\ifx0#1
	\begin{equation*} #2 \end{equation*}\fi  
	\ifx1#1\begin{equation}#2\end{equation}\fi   
	\ifx2#1\begin{align*}#2\end{align*}\fi   
	\ifx3#1\begin{align}#2\end{align}\fi    
	\ifx4#1\begin{gather*}#2\end{gather*}\fi  
	\ifx5#1\begin{gather}#2\end{gather}\fi   
	\ifx6#1\begin{multline*}#2\end{multline*}\fi  
	\ifx7#1\begin{multline}#2\end{multline}\fi  
	\ifx8#1\begin{multline*}\begin{split}#2\end{split}\end{multline*}\fi
	\ifx9#1\begin{multline}\begin{split}#2\end{split}\end{multline}\fi
}

\newcommand {\e }[1]{\eqref{#1}}
\newcommand {\lem }[1]{Lemma \ref{#1}}

\newcommand {\pro }[1]{Proposition \ref{#1}}
\newcommand {\trm }[1]{Theorem \ref{#1}}

\title[On UC-multipliers for multiple trigonometric systems] {On UC-multipliers for multiple trigonometric systems }
\author{Grigori A. Karagulyan}
\address{Institute of Mathematics NAS RA, Marshal Baghramian ave., 24/5, Yerevan, 0019, Armenia}
\email{g.karagulyan@gmail.com}

\thanks{Research was supported by the Higher Education and Science Committee of RA, in the frames of the
	research project 21AG-1A045}

\subjclass[2010]{ 42C05, 42C10, 28D05}
\keywords{multiple trigonometric system, non-overlapping polynomials, Weyl multiplier, Menshov-Rademacher theorem}
\begin{document}
	\maketitle
	\begin{abstract}
		We investigate the class of sequences $w(n)$ that can serve as almost-everywhere convergence Weyl multipliers for all rearrangements of multiple trigonometric systems. We show that any such sequence must satisfy the bounds $\log n\lesssim w(n)\lesssim\log^2 n$. Our main result establishes a general equivalence principle between one-dimensional and multidimensional trigonometric systems, which allows one to extend certain estimates known for the one-dimensional case to higher dimensions.
	\end{abstract}

\section{Introduction}

\subsection{Weyl multipliers}\label{S7}
Let 
\begin{equation}\label{r39}
	\Phi=\{\phi_n:\, n=1,2,\ldots\}\subset L^2(0,1)
\end{equation}
be an orthonormal system. Recall that a sequence of positive numbers $w(n)\nearrow\infty$ is called an almost everywhere convergence Weyl multiplier (or simply a Weyl multiplier)  for $\Phi$ if every series 
\begin{equation}\label{a1}
	\sum_{n=1}^\infty a_n\phi_n(x),
\end{equation}
whose coefficients satisfy 
\begin{equation}\label{a3}
	\sum_{n=1}^\infty a_n^2w(n)<\infty
\end{equation}
converges almost everywhere (see \cite{KaSa} or \cite{KaSt}). 
The classical Menshov-Rademacher theorem (\cite{Men},  \cite{Rad}) asserts that the sequence $\log^2 n$ is a Weyl multiplier for every orthonormal system.  The optimality of $\log^2 n$ of this sequence was also shown by Menshov in \cite{Men}, where he constructed an orthonormal system for which any sequence $w(n)=o(\log^2n)$ fails to be Weyl multiplier. The following concepts are standard in the theory of orthogonal series
\begin{definition}
	A sequence of positive numbers $w(n)\nearrow\infty$ is called an almost everywhere convergence Weyl multiplier for rearrangements (RC-multiplier) of an orthonormal system $\Phi=\{\phi_n\}_{n\ge 1}$ if it is a Weyl multiplier for every system $\{\phi_{n_k}\}$, where $\{n_k\}$ is a sequence of distinct  naturals (not necessarily increasing).
\end{definition}
\begin{definition}
	A sequence of positive numbers $w(n)\nearrow\infty$ is called an almost everywhere unconditional convergence Weyl multiplier (UC-multiplier) for an orthonormal system $\Phi=\{\phi_n\}_{n\ge 1}$ if  under condition \e{a3} series \e{a1} converges almost everywhere after every rearrangement of its terms.
\end{definition}
By the Menshov–Rademacher theorem, the sequence $\log^2 n$ is RC-multiplier for any orthonormal system $\Phi$ and Menshov's counterexample shows that $\log^2 n$ is optimal in this sense. The study of RC- and UC-multipliers for classical orthonormal systems is a longstanding topic in the theory of orthogonal series. It is well known that the constant sequence $w(n)\equiv 1$ is a Weyl multiplier for trigonometric, Walsh, Haar and Franklin systems, yet it is not an RC-multiplier for any of them. Kolmogorov \cite{Kol} was the first to observe that the sequence $w(n)\equiv 1$ is not RC-multiplier for the trigonometric system. However, he has never published the proof of this fact. A proof of this assertion was later given by Zahorski \cite{Zag}. Afterward, developing Zahorski's argument, Ul\cprime yanov \cite{Uly6, Uly7} established such a property for Haar and Walsh systems. Using the Haar functions technique, Olevskii \cite{Ole} succeed proving that such a phenomenon is common for arbitrary complete orthonormal system.
Ul\cprime yanov \cite{Uly1, Uly4} found the optimal growth of the RC and UC multipliers of Haar system. Moreover, his  technique of the proof became a key argument in the study of the analogous problems for other classical systems.

\begin{OldTheorem}[Ul\cprime yanov, \cite{Uly4}]\label{OT1}
	The sequence $\log n$ is an RC-multiplier for the Haar system.
\end{OldTheorem}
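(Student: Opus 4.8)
The plan is to concentrate all Haar‑specific information in one maximal inequality and to obtain everything else from the classical Men\cprime shov--Rademacher theorem. Precisely, I would first prove that there is an absolute constant $C$ such that, for every finite sequence $n_1,\dots ,n_N$ of distinct positive integers and all reals $a_1,\dots ,a_N$,
\[
\int_0^1\Bigl(\max_{1\le m\le N}\Bigl|\sum_{k=1}^m a_k h_{n_k}(x)\Bigr|\Bigr)^{2}dx\ \le\ C\,(1+\log N)\sum_{k=1}^N a_k^{2}.
\]
Granting this, the theorem follows by the usual blocking argument. Given distinct naturals $\{n_k\}$ with $\sum_k a_k^{2}\log(k+2)<\infty$, partition the positions into dyadic blocks $B_i=\{k:2^i\le k<2^{i+1}\}$. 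Applied blockwise, the displayed inequality gives $\bigl\|\max_m\bigl|\sum_{k\in B_i,\,k\le m}a_k h_{n_k}\bigr|\bigr\|_2^{2}\lesssim (i+2)\sum_{k\in B_i}a_k^{2}$, which (since $i\asymp\log k$ on $B_i$) is summable in $i$, so the oscillation of the partial sums inside each block tends to $0$ a.e.; meanwhile the block sums $f_i=\sum_{k\in B_i}a_k h_{n_k}$ are pairwise orthogonal (the $n_k$ being distinct), with $\|f_i\|_2^{2}=:\beta_i^{2}$ and $\sum_i i\,\beta_i^{2}<\infty$, so --- using $\log^{2}i=O(i)$ --- the Men\cprime shov--Rademacher theorem applied to the orthonormal system $\{f_i/\beta_i\}$ gives a.e. convergence of $\sum_i f_i$; combining the two statements yields a.e. convergence of $\sum_k a_k h_{n_k}$. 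Thus everything reduces to the displayed maximal inequality.

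For that inequality I would use the fact that, in the filtration by binary digits, a rearranged Haar partial sum is a martingale. Grouping the $h_n$ by dyadic level $\ell(n)=\lfloor\log_2 n\rfloor$ --- within a level the supports partition $[0,1)$ --- the level‑$\ell$ contribution to $S_m(x)=\sum_{k\le m}a_k h_{n_k}(x)$ is, at each $x$, a single term $c_\ell^{(m)}(x)\,\varepsilon_\ell(x)$, where $\varepsilon_\ell(x)=(-1)^{d_{\ell+1}(x)}$ with $d_{\ell+1}$ the $(\ell+1)$-st binary digit of $x$, and $c_\ell^{(m)}(x)=a_k\,2^{\ell/2}\,\mathbf 1[\text{level }\ell\text{ switched on by step }m]$ depends only on the first $\ell$ digits of $x$; hence $S_m=\sum_\ell c_\ell^{(m)}\varepsilon_\ell$ is a martingale transform, with the $c_\ell^{(m)}$ predictable. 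For the \emph{natural} ordering of the levels, Doob's maximal inequality bounds the corresponding maximal function by $C(\sum_k a_k^{2})^{1/2}$ with no logarithm; the logarithm is forced purely by the rearrangement, which switches on the levels not in the order of $\ell$ but in the order prescribed by $\{n_k\}$, so that $\max_m|S_m|$ is the maximum of the $\le N$ partial transforms over the prefixes of an arbitrary reordering of the levels. Running the Rademacher--Menshov scheme --- expand the step $m$ in binary, so that $S_m$ is a sum of at most $\log_2 N$ ``dyadic blocks'' of levels, each a martingale transform of $L^{2}$-norm $\le(\sum_k a_k^{2})^{1/2}$ --- and estimating crudely by the triangle inequality over these $\log_2 N$ terms gives the inequality with $\log^{2}N$ in place of $\log N$.

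The whole difficulty --- the one point where the Haar system beats an arbitrary orthonormal system --- is to gain a square root here: one must replace the triangle inequality over the $\log_2 N$ scales by a quasi‑orthogonality estimate, showing that the square of the $L^{2}$-norm of the sum of the $\log_2 N$ dyadic‑block maximal functions is still only $O(\log N\cdot\sum_k a_k^{2})$. Carrying this out --- using the martingale‑transform structure and martingale square‑function estimates rather than mere orthogonality, in particular tracking how a single high‑level Haar ``spike'' interacts with the coarser blocks after an arbitrary permutation of the indices --- is what I expect to be the main obstacle. It is exactly this square‑root gain that turns Menshov's $\log^{2}n$ into $\log n$ for the Haar system; the matching construction showing that no $w(n)=o(\log n)$ survives would be carried out separately and is not needed for the statement above.
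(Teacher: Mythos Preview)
The paper does not prove this theorem: it is quoted as a background result of Ul\cprime yanov, with a citation to \cite{Uly4}, and no argument is given. So there is nothing in the paper to compare your attempt against.

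As to your proposal on its own merits: the reduction to the maximal inequality
\[
\int_0^1\Bigl(\max_{1\le m\le N}\Bigl|\sum_{k=1}^m a_k h_{n_k}\Bigr|\Bigr)^{2}dx\ \le\ C\,(1+\log N)\sum_{k=1}^N a_k^{2}
\]
is standard and your blocking argument after it is correct. But this maximal inequality \emph{is} the entire content of the theorem, and you do not prove it --- you reach $\log^{2}N$ via the Rademacher--Men\cprime shov dyadic scheme and then state that gaining the missing square root ``is what I expect to be the main obstacle.'' That is an honest assessment, but it means what you have submitted is a plan, not a proof.

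It is also not clear that the route you sketch for that square root --- a ``quasi-orthogonality estimate'' among the $\log_2 N$ dyadic-block maximal functions coming from the Rademacher--Men\cprime shov decomposition of the \emph{position} variable $m$ --- can actually be made to work. The gain in Ul\cprime yanov's argument does not come from improved orthogonality between those position-blocks; it comes from the pointwise structure of the Haar system: at each $x$ at most one Haar function per dyadic level is nonzero, so every $S_m(x)$ is a subset-sum of the \emph{level}-indexed values, and the tree (nesting) structure of the Haar supports controls how many essentially different such subset-sums can occur and how large they can be. Your martingale-transform description of $S_m$ records part of this, but the passage from there to the factor $\log N$ requires a concrete combinatorial argument exploiting that tree structure, not a generic appeal to ``martingale square-function estimates.'' Until that step is actually written down, the proof is missing its main idea.
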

\begin{OldTheorem}[Ul\cprime yanov, \cite{Uly4}]\label{OT2}
	A non-decreasing sequence $w(n)$ is a UC-multiplier for the Haar system if and only if
	\begin{equation}\label{a4}
		\sum_{n=1}^\infty\frac{1}{nw(n)}<\infty.
	\end{equation}
\end{OldTheorem}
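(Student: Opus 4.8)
Throughout write $W_k=w(2^k)$; since $w$ is non-decreasing, $\sum_n 1/(nw(n))$ and $\sum_k 1/W_k$ converge or diverge together, one has $\sum_n a_n^2w(n)\ge\sum_k W_kb_k^2$ where $b_k^2=\sum_j a_{k,j}^2$ is the squared $\ell^2$-mass of the generation-$k$ Haar coefficients, and the basic feature of the Haar system is that at a non-dyadic $x$ exactly one Haar function of each generation survives, so that $\sum_n|a_nh_n(x)|=|a_0|+\sum_k 2^{k/2}|a_{k,j_k(x)}|$, where $j_k(x)$ is the dyadic cell of length $2^{-k}$ containing $x$.

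\textbf{Sufficiency.} Assuming $\sum_k 1/W_k<\infty$ and $\sum_n a_n^2w(n)<\infty$, the plan is to prove the stronger statement that $\sum_n a_nh_n$ converges \emph{absolutely} almost everywhere, which of course forces every rearrangement to converge a.e. By the displayed identity it is enough to check $\int_0^1\sum_n|a_nh_n|<\infty$. Fubini together with $\int|h_{k,j}|=2^{-k/2}$ rewrites the left side as $\sum_k 2^{-k/2}\sum_j|a_{k,j}|$; Cauchy--Schwarz inside generation $k$ bounds $\sum_j|a_{k,j}|$ by $2^{k/2}b_k$, so the sum is at most $\sum_k b_k$; and a second Cauchy--Schwarz gives $\sum_k b_k\le\big(\sum_k W_kb_k^2\big)^{1/2}\big(\sum_k 1/W_k\big)^{1/2}$, with $\sum_k W_kb_k^2\le\sum_n a_n^2w(n)<\infty$. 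That settles the easy direction.

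\textbf{Necessity.} Now suppose $\sum_k 1/W_k=\infty$; one must build coefficients with $\sum_n a_n^2w(n)<\infty$ and a rearrangement of $\sum_n a_nh_n$ that diverges on a set of positive measure. I would follow the classical Men'shov--Ul'yanov scheme of concatenating bad polynomials: produce finitely supported Haar polynomials $Q^{(m)}$ on pairwise disjoint blocks of generations, together with orderings $\pi_m$ of the terms of $Q^{(m)}$, such that the weighted energies $\varepsilon_m$ of the $Q^{(m)}$ satisfy $\sum_m\varepsilon_m<\infty$ while the $\pi_m$-rearranged partial sums of $Q^{(m)}$ oscillate by at least a fixed $\delta_0>0$ on a set $G_m$ with $|G_m|\ge c_0>0$. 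Taking $(a_n)$ to be the coefficients of $\sum_m Q^{(m)}$ and the rearrangement that lists the blocks consecutively, each internally ordered by $\pi_m$, the partial sums then oscillate by $\ge\delta_0$ during block $m$ whenever $x\in G_m$; since $|\limsup_m G_m|\ge\limsup_m|G_m|\ge c_0$, the rearranged series diverges on a set of measure $\ge c_0$. The hypothesis enters exactly here: running the sufficiency computation backwards (any partial sum is dominated by $\sum|a_nh_n|$, then apply the two Cauchy--Schwarz steps over the block) shows that a Haar polynomial whose rearranged partial sums exceed $\delta_0$ in absolute value on a set of measure $\ge c_0$ must have weighted energy $\gtrsim\delta_0^2c_0^2\big(\sum_{k\in\text{block}}1/W_k\big)^{-1}$; so spreading $Q^{(m)}$ over a sufficiently long run of generations — available precisely because $\sum_k 1/W_k$ diverges — is what allows $\varepsilon_m$ to be made as small as desired.

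The real obstacle is constructing the $Q^{(m)}$ so that the oscillation property holds while $\varepsilon_m$ stays summable. A single "nested chain" of Haar functions running down one dyadic branch gives large rearranged partial sums, but only on a dyadic interval as short as its deepest generation, whereas an equidistributed block behaves like a Rademacher sum and cannot be made to oscillate by any rearrangement. The block must be an intermediate, tree-like superposition of partial nested chains whose depths and coefficients are balanced against the weights $2^{-k}W_k$; and since the order in which a generic $x$ meets the terms of $Q^{(m)}$ is forced to depend on $x$, one must check that this $x$-dependence does not destroy the oscillation on a set of controlled measure. This local construction is the technical heart of the theorem — it is essentially Ul\cprime yanov's, and is related to the techniques behind Theorem~\ref{OT1} — while, granting it, the concatenation and the Fatou/Borel--Cantelli step above are routine.
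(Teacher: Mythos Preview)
The paper does not contain a proof of Theorem~\ref{OT2}. It is stated as a classical background result, labeled an ``OldTheorem'' and attributed to Ul\cprime yanov \cite{Uly4}; the surrounding discussion merely places it in the historical context of RC- and UC-multiplier problems, and the paper then moves on to its own contribution (Theorem~\ref{T1}). There is therefore nothing in the paper to compare your proposal against.

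For what it is worth, your sufficiency argument is the standard one and is correct: under \eqref{a4} and \eqref{a3} the Haar series converges absolutely a.e., hence unconditionally. Your necessity sketch correctly identifies the overall architecture (block concatenation, Borel--Cantelli on the oscillation sets) and correctly isolates where the divergence of $\sum_k 1/W_k$ is spent, but you explicitly stop short of the actual construction of the polynomials $Q^{(m)}$, calling it ``the technical heart of the theorem'' and deferring to Ul\cprime yanov. That is an honest accounting, but it means the necessity direction is not proved in your write-up --- it is an outline with the hard step left as a citation. If you intend this as a self-contained proof rather than a survey paragraph, the gap is precisely the construction you flag: exhibiting, for an arbitrarily long run of generations, a Haar polynomial whose rearranged partial sums oscillate by a fixed amount on a set of fixed positive measure while the weighted energy is controlled by $(\sum_{k\in\text{block}}1/W_k)^{-1}$.
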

In fact, if $\log n$ is a RC-multiplier for an orthonormal system, then any non-decreasing sequence $w(n)$ satisfying \e{a4} is UC-multiplier for the same orthonormal system. This follows from a general result that is established in later papers \cite{Uly3, Pol}.

The results of Theorems \ref{OT1} and \ref{OT2} have extensions for some other orthonormal systems of wavelet type. In particular,
G.Gevorkyan in \cite{Gev,Gev1} established the analogue of Theorem \ref{OT2} for the Franklin and Ciesielski systems of wavelet type. For arbitrary wavelet type systems the similar results were recently proved by the author in \cite{KaKa}. Moreover, paper \cite{KaKa} provides a new approach that enables one to get  analogues of both Theorems \ref{OT1} and \ref{OT2} for systems of non-overlapping polynomials with respect to general  wavelet type systems (some details of the papers is given below).

For character type orthogonal systems, such as the trigonometric and Walsh systems, the problem of characterization of RC and UC multipliers is not completely solved yet. These problems were first posed in Ul\'yanov’s 1964 survey \cite{Uly5} and revisited in several later papers (see \cite{Uly4}, p. 1041, \cite{Uly8}, p. 80, \cite{Uly9}, p. 57). The Menshov-Rademacher theorem implies that $\log^2 n$ is a RC-multiplier for both the trigonometric and Walsh systems, and no sequence $w(n)=o(\log^2n)$ is known to be RC-multiplier for either system. 
Lower bounds for the RC and UC multipliers of the Walsh system were studied in \cite{Boch, Nak5, Nak3, Tan3}. The best result, proved independently by Bochkarev \cite{Boch2, Boch} and Nakata \cite{Nak3}, shows that if an increasing sequence $w(n)$ fails to satisfy Ul\'yanov's condition \e{a4},
then it is not a UC-multiplier for the Walsh system. For the trigonometric system the lower bounds for the RC and UC multipliers were studied in \cite{Mor, Tan2, Nak1, Nak2, Seroj,Kar2,Kar3}. The analogous of the Bochkarev-Nakata \cite{Boch, Nak3} theorem for the trigonometric system was recently proved by the author in \cite{Kar2}. We also mention a recent paper \cite{Kar5}, where it was proved that for dyadic orthonormal trigonometric polynomials the sequence $\sqrt{\log n}$ is RC-multiplier.

For a fixed orthonormal system \e{r39} one can consider orthonormal systems formed by non-overlapping polynomials
\begin{equation}\label{r40}
	p_n(x)=\sum_{k\in G_k}a_k\phi_k(x),\quad k=1,2,\ldots,
\end{equation}
i.e. $G_k$ are pairwise disjoint sets of natural numbers. 

\begin{definition}
	A sequence of positive numbers $w(n)\nearrow\infty$ is said to be a strong RC-multiplier (SRC-multiplier) for an orthonormal system \e{r39} if it is a  Weyl multiplier for any orthonormal system of non-overlapping polynomials \e{r40}.
\end{definition}
Observe that if $w(n)$ is SRC-multiplier of an orthonormal system $\Phi$, then it is also RC-multiplier for $\Phi$, since any subsequence $\{\phi_{n_k}\}$ corresponds to  \e{r40} with each $G_k=\{n_k\}$. 
It is of recent interest finding the best SRC-multiplier for a given orthonormal system. It was proved in \cite{Kar1} that for the general martingale difference systems the optimal SRC-multiplier is the sequence $\log n$. An analogous result for general wavelet-type systems was proved in \cite{KaKa}.
Hence, we have $\log n$ is a Weyl multiplier for any orthonormal system of non-overlapping polynomials with respect Haar, Franklin and all other wavelet type systems (see definition in \cite{KaKa}). The optimal growth of a sequence $w(n)$ that can serve as an RC or SRS multiplier for the trigonometric or Walsh systems is still unknown. From the Menshov–Rademacher theorem together with the results of \cite{Boch, Nak3, Kar5}, it follows only that any such sequence must satisfy 
\begin{equation}\label{x24}
	\log n\lesssim w(n)\lesssim \log^2 n. 
\end{equation} 

In this paper, we study the RC(SRS)-multiplier problem for the multiple trigonometric system. Namely, we prove that this problem coincides with the corresponding problem for the one-dimensional trigonometric system.
\begin{theorem}\label{T1}
	Let $w(n)$ be an increasing numerical sequence satisfying $w(n^2)\le Cw(n)$. Then $w(n)$ is an RC(SRS)-multiplier for the one-dimensional trigonometric system if and only if it is an RC(SRS)-multiplier for the multiple trigonometric system.
\end{theorem}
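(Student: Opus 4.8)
The plan is to route the equivalence through the \emph{finitary}, quantitative form of the multiplier property. Recall the standard reformulation: for an orthonormal system $\Psi\subset L^2$, an increasing sequence $w$ is an RC-multiplier for $\Psi$ if and only if there is a constant $C$ with
\[
\Big\|\max_{1\le N\le M}\Big|\sum_{k=1}^{N}a_k\psi_k\Big|\,\Big\|_{L^2}\le C\Big(\sum_{k=1}^{M}|a_k|^2w(k)\Big)^{1/2}
\]
for every finite ordered tuple $\psi_1,\dots,\psi_M$ of distinct elements of $\Psi$ and all scalars $a_k$; the SRC case is the same statement with $\psi_k$ replaced throughout by $L^2$-normalized polynomials $\sum_{m\in G_k}c^{(k)}_m\psi_m$ over pairwise disjoint finite index sets $G_k$. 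The implication from a.e.\ convergence to this maximal estimate rests on the Nikishin--Stein maximal principle (cf.\ \cite{KaSa,KaSt}); the point at which the hypothesis $w(n^2)\le Cw(n)$ is really used is in making the constant $C$ \emph{uniform over all rearrangements} --- when one splices a sequence of hypothetical counterexamples into a single rearranged system, the $j$-th offending block is shifted past the total length of the earlier blocks, and $w(n^2)\le Cw(n)$ (equivalently $w(2n)\le Cw(n)$ and, on iteration, $w(n^d)\lesssim w(n)$) is precisely what keeps the weighted norm $\sum|a_k|^2w(k)$ of the spliced coefficient sequence finite after this shift.

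Granting this reformulation, the equivalence becomes a transference between $\mathbb T=\mathbb R/\mathbb Z$ and $\mathbb T^d$. The implication ``multiple $\Rightarrow$ one-dimensional'' is the trivial half: the one-dimensional trigonometric system, seen as $\{e^{2\pi i n x_1}\}_{n\in\mathbb Z}$ on $\mathbb T^d$, is a subsystem of the multiple trigonometric system (and non-overlapping one-dimensional polynomials are non-overlapping multiple polynomials); for a tuple drawn from this subsystem the maximal function does not depend on $x_2,\dots,x_d$, so by Fubini its $L^2(\mathbb T^d)$ and $L^2(\mathbb T)$ norms agree and the finitary bound simply descends.

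For the substantial half, ``one-dimensional $\Rightarrow$ multiple'', fix a finite ordered tuple of distinct frequencies $m_1,\dots,m_M\in\mathbb Z^d$. For $i\ne k$ the set $\{v\in\mathbb Z^d:\langle v,m_i-m_k\rangle=0\}$ is a proper subgroup of $\mathbb Z^d$, so (for instance by taking $v=(1,L,L^2,\dots,L^{d-1})$ with $L$ large) one may pick a \emph{primitive} $v\in\mathbb Z^d$ for which $m\mapsto\langle v,m\rangle$ is injective on $\{m_1,\dots,m_M\}$. Completing $v$ to a matrix $A\in GL_d(\mathbb Z)$ with first column $v$, the substitution $x=Az$, $z=(s,y)\in\mathbb T\times\mathbb T^{d-1}$, is a measure-preserving automorphism of $\mathbb T^d$ carrying $e^{2\pi i\langle m_k,x\rangle}$ to $e^{2\pi i\langle v,m_k\rangle s}\,e^{2\pi i\langle\mu_k,y\rangle}$ for suitable $\mu_k\in\mathbb Z^{d-1}$. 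For each fixed $y$ the factors $e^{2\pi i\langle\mu_k,y\rangle}$ are unimodular, so the $s$-slice of the partial-sum maximal function equals the maximal function of the finite ordered subsystem $\{e^{2\pi i\langle v,m_k\rangle s}\}_{k=1}^{M}$ of the one-dimensional trigonometric system with coefficients $a_ke^{2\pi i\langle\mu_k,y\rangle}$, whose moduli are $|a_k|$; invoking the one-dimensional finitary bound for each $y$ and integrating in $y$ then gives the $d$-dimensional bound for $m_1,\dots,m_M$ with the \emph{same} constant. The identical computation, with $G_k$ in place of single frequencies, treats non-overlapping polynomials, since $v$ can be taken injective on $\bigcup_k G_k$, so that the images $\{\langle v,m\rangle:m\in G_k\}$ stay pairwise disjoint and each sliced polynomial has $L^2(ds)$-norm $1$ for every $y$. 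Finally, to go from this finitary $d$-dimensional estimate back to the RC(SRC)-multiplier property on $\mathbb T^d$ (that is, to a.e.\ convergence of the full series) one cuts the index set into dyadic blocks $[2^{j-1},2^j)$: convergence of the orthogonal block-sums follows from the Menshov--Rademacher theorem (every one-dimensional RC-multiplier satisfies $w(n)\gtrsim\log n$, so on the $j$-th block the Menshov--Rademacher weight $\log^2 j$ is dominated by $w(k)$), the oscillation inside each block is controlled by the estimate just proved (with its own matrix $A_j$), and the two combine in the usual way.

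The main obstacle is the reformulation of the first paragraph: manufacturing, out of the purely \emph{qualitative} RC(SRC)-multiplier hypothesis, a \emph{quantitative} maximal inequality whose constant does not deteriorate under rearrangements of the system. That is exactly the step in which $w(n^2)\le Cw(n)$ is indispensable. By contrast the transference itself --- the $GL_d(\mathbb Z)$-substitution together with Fubini --- is clean and dimension-free, and is what forces the two multiplier problems to \emph{coincide} rather than merely to be comparable; the remaining pieces (the dyadic assembly via Menshov--Rademacher, and the bookkeeping for non-overlapping polynomials) are routine.
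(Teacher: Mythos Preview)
Your transference step via a $GL_d(\ZZ)$ change of variables is correct and elegant, and it is a genuinely different route from the paper's. The paper never passes through a quantitative maximal inequality; instead it builds, for each dyadic block of the given $d$-dimensional sequence, a probabilistically equivalent block of one-dimensional \emph{discrete} trigonometric functions using the Chinese Remainder Theorem (\pro{P1} and \lem{L3}), then expands each such function as a short non-overlapping trigonometric polynomial (\lem{L2}), and finally flattens the whole collection into a single rearranged one-dimensional series of exponentials. The qualitative one-dimensional RC hypothesis is applied directly to that flattened series, and the condition $w(n^2)\le Cw(n)$ enters at a completely concrete place: the flattening sends position $n$ to position $s_n\le n^4$, so one needs $w(s_n)\lesssim w(n)$. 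By contrast, your argument localises the doubling hypothesis in the reformulation step and needs nothing further for the transference itself, which is appealing.

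The gap is precisely the step you yourself flag as ``the main obstacle'': the equivalence between the qualitative RC (or SRC) multiplier property and a maximal inequality in $L^2$ with a constant \emph{uniform over all finite rearrangements}. Nikishin--Stein yields, for a \emph{fixed} rearrangement, only a weak-type bound with a constant depending on that rearrangement; upgrading to a strong $L^2$ bound is not automatic, and the splicing argument you sketch for uniformity runs into the difficulty that the hypothetical bad tuples can be arbitrarily long, so after concatenation the index of the $j$-th block may sit far beyond any power of its internal index, and $w(n^2)\le Cw(n)$ no longer controls the shift. (One cannot simply truncate a bad tuple to a fixed length, since the ``badness'' of a partial-sum maximal function need not localise.) A further wrinkle is that in your block-by-block scheme each dyadic block uses its own matrix $A_j$, so one cannot assemble a \emph{single} one-dimensional rearranged series that simultaneously represents all blocks; this is exactly why you are forced through a uniform estimate rather than a direct qualitative reduction. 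The paper's CRT construction is designed to sidestep both issues: it produces, for the whole sequence at once, a single explicit one-dimensional rearrangement to which the qualitative hypothesis applies verbatim. If you can supply a proof of the uniform maximal reformulation (even in weak type, which would suffice for the dyadic assembly via Borel--Cantelli), your route goes through; as written, that step is asserted rather than established.
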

\begin{remark}
	Note that the bound $w(n^2)\le Cw(n)$ is natural, since Weyl multipliers are known to have at most logarithmic growth, and this inequality is satisfied for any sequence $w(n)$ with logarithmic behavior.  
\end{remark}
\begin{remark}
	Observe that for the Walsh system the result of \trm{T1} is immediate, since the Walsh system is probabilistically equivalent to its multiple system. Indeed, the functions in both systems are generated by all possible products of independent random variables taking the values $\pm1$ with equal probability. Such an equivalence does not hold for the trigonometric system. Nevertheless, in this paper we develop a technique that effectively replaces this equivalence principle in the trigonometric setting.
\end{remark}
Applying \trm{T1}, one can extend the bound \e{x24}, known for the one-dimensional trigonometric system, to the multidimensional case.
\begin{corollary}
	If a sequence $w(n)$ is a RC-multiplier of a multiple trigonometric system, then $\log n\lesssim w(n)\lesssim \log^2 n$.  
\end{corollary}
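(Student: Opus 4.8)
The plan is to deduce both inequalities from \trm{T1} together with the one-dimensional bounds \e{x24}, the upper one being a direct consequence of the Menshov--Rademacher theorem. Throughout I retain the hypothesis $w(n^{2})\le Cw(n)$ of \trm{T1}; as the remark there explains, this is no real restriction, because only sequences of logarithmic type can be relevant to the problem.

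I would first handle the upper estimate, which requires nothing beyond Menshov--Rademacher: the multiple trigonometric system is orthonormal on the cube, hence so is every rearrangement of it, and therefore $\log^{2}n$ is a Weyl multiplier for each such rearrangement. Thus $\log^{2}n$ is itself an RC-multiplier for the multiple trigonometric system, which is precisely the statement that a minimal RC-multiplier --- the sequence whose growth \e{x24} is meant to pin down --- cannot exceed $\log^{2}n$.

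For the lower estimate, let $w(n)$ be an RC-multiplier for the multiple trigonometric system with $w(n^{2})\le Cw(n)$. By \trm{T1}, $w(n)$ is then an RC-multiplier for the one-dimensional trigonometric system, and the required bound $w(n)\gtrsim\log n$ is exactly the one-dimensional lower estimate recorded in \e{x24} and proved in \cite{Boch,Nak3,Kar5} --- for the trigonometric system itself in \cite{Kar2}. (If one prefers to avoid \trm{T1} here, the implication ``RC-multiplier for the multiple system $\Rightarrow$ RC-multiplier for the one-dimensional system'' can be obtained directly: the one-dimensional system sits inside the multiple system as the subsystem of exponentials depending on a single coordinate, with the same almost-everywhere convergence behaviour by Fubini, and the bound $w(2n)\le w(n^{2})\le Cw(n)$ lets one transport the RC-multiplier property to this infinite subsystem by interleaving an arbitrary rearrangement of it, term by term, with an enumeration of the remaining exponentials and assigning the latter zero coefficients.)

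I do not anticipate any genuine difficulty in the corollary: once \trm{T1} is in hand, all of the above is bookkeeping. The real obstacle --- the lack of a probabilistic equivalence between the one- and multi-dimensional trigonometric systems, which for the Walsh system trivializes the analogous statement --- lies entirely in the proof of \trm{T1}.
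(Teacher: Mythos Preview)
Your argument is correct and matches the paper's own (one-sentence) justification, which is simply to invoke \trm{T1} together with the one-dimensional bounds \e{x24}. You give more detail than the paper does, and your observation that neither bound strictly needs \trm{T1} --- Menshov--Rademacher gives the upper bound directly, and the embedding of $\zT$ in $\zT_d$ gives the lower bound --- is a useful remark; note, however, that the interleaving step in your parenthetical is unnecessary, since by the paper's definition an RC-multiplier is already a Weyl multiplier for every sequence $\{\phi_{n_k}\}$ with distinct $n_k$, not only for full rearrangements.
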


\section{Probabilistically equivalence and measure-preserving maps}
\begin{definition}\label{D1}
	Let $(X,\ZA,\mu)$ and $(Y,\ZB,\nu)$ be probability spaces. Sequences of measurable functions $f_k:X\to \ZR$ and $g_k:Y\to \ZR$, $k=1,2,\ldots$, are said to be probabilistically equivalent if they share the same joint cumulative distribution function, i.e. we have
	\begin{equation}\label{b8}
		\mu\left(\bigcap_{k=1}^nf_k^{-1}(E_k)\right)=\nu\left(\bigcap_{k=1}^ng_k^{-1}(E_k)\right)
	\end{equation}
	for every choice of Borel sets $E_k\subset \zB$, $k=1,2,\ldots,n$. 
\end{definition}
A natural way to obtain probabilistically equivalent sequences is to use a measure-preserving map, as shown in \lem{L6} below.
\begin{definition}
	Let $(X,\ZA,\mu)$ and $(Y,\ZB,\nu)$ be probability spaces. We say a function $\Theta:\ZA \to \ZB$ is measure-preserving map (MP-map), if for any measurable sets $E,F\in \ZA$ we have
	\begin{align}
		&\text{1) }\mu(E)=\nu(\Theta(E)),\\
		&\text{2) }\mu(E\cup F)=\nu(\Theta(E)\cup \Theta(F)).\label{x43}
	\end{align}
\end{definition}

\begin{lemma}\label{L1}
	If  $\Theta$ is a MP-map  from a probability space $(X,\ZA,\mu)$ to $(Y,\ZB,\nu)$, then for any measurable sets $E,F\in \ZA$, $E_k\in \ZA$, $k=1,2,\ldots$,
	\begin{align}
		&\mu(E\setminus F)=\nu(\Theta(E)\setminus \Theta(F)),\label{x6}\\
		&\mu\left(\bigcup_k E_k\right)=\nu\left(\bigcup_k \Theta(E_k)\right),\label{x42}\\
		&\mu\left(\bigcap_k E_k\right)=\nu\left(\bigcap_k \Theta(E_k)\right).\label{a0}
	\end{align}
\end{lemma}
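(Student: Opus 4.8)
The plan is to extract from the two defining properties of an MP-map the exact identity $\mu(E\cap F)=\nu(\Theta(E)\cap\Theta(F))$, which already yields \e{x6}, and then to bootstrap it to the countable operations \e{x42} and \e{a0} by disjointification and De Morgan's laws. Concretely: since $\mu$ and $\nu$ are finite, the two-set inclusion--exclusion formula combined with part 1) of the definition and \e{x43} gives
\begin{equation*}
\mu(E\cap F)=\mu(E)+\mu(F)-\mu(E\cup F)=\nu(\Theta(E))+\nu(\Theta(F))-\nu(\Theta(E)\cup\Theta(F))=\nu(\Theta(E)\cap\Theta(F))
\end{equation*}
for all $E,F\in\ZA$; subtracting this from $\nu(\Theta(E))=\mu(E)$ and using $\Theta(E)\setminus\Theta(F)=\Theta(E)\setminus\bigl(\Theta(E)\cap\Theta(F)\bigr)$ proves \e{x6}. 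The choices $E=F=\emptyset$ and $E=X$ also give $\nu(\Theta(\emptyset))=0$ and $\nu(\Theta(X))=\mu(X)=1$, so $\Theta(X)$ equals $Y$ modulo a $\nu$-null set. Finally I record the monotonicity statement: if $E\subseteq F$ then $E\cup F=F$, so part 1) and \e{x43} force $\nu(\Theta(E)\setminus\Theta(F))=0$.

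For \e{x42}, applying monotonicity with $F=\bigcup_jE_j$ and summing over $k$ gives $\nu\bigl(\bigcup_k\Theta(E_k)\setminus\Theta(\bigcup_jE_j)\bigr)=0$, hence $\nu(\bigcup_k\Theta(E_k))\le\nu(\Theta(\bigcup_jE_j))=\mu(\bigcup_jE_j)$. For the reverse inequality I would disjointify: set $E_1'=E_1$ and $E_k'=E_k\setminus\bigcup_{j<k}E_j$, so the $E_k'$ are pairwise disjoint with $\bigcup_kE_k'=\bigcup_kE_k$. The exact identity applied to the disjoint pairs gives $\nu(\Theta(E_j')\cap\Theta(E_k'))=\mu(E_j'\cap E_k')=0$ for $j\ne k$, so $\{\Theta(E_k')\}$ is pairwise disjoint modulo $\nu$-null sets; countable additivity then gives $\nu(\bigcup_k\Theta(E_k'))=\sum_k\nu(\Theta(E_k'))=\sum_k\mu(E_k')=\mu(\bigcup_kE_k)$, and since $E_k'\subseteq E_k$ implies $\Theta(E_k')\subseteq\Theta(E_k)$ mod $\nu$-null, we conclude $\nu(\bigcup_k\Theta(E_k))\ge\mu(\bigcup_kE_k)$, proving \e{x42}. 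Then \e{a0} follows by De Morgan: from the exact identity, $\Theta(X\setminus E_k)$ is disjoint from $\Theta(E_k)$ mod $\nu$-null and has $\nu$-measure $1-\nu(\Theta(E_k))$, so $\Theta(X\setminus E_k)=Y\setminus\Theta(E_k)$ up to a $\nu$-null set; applying \e{x42} to $\{X\setminus E_k\}$ and using $\mu(X)=\nu(Y)=1$,
\begin{equation*}
\mu\Bigl(\bigcap_kE_k\Bigr)=1-\mu\Bigl(\bigcup_k(X\setminus E_k)\Bigr)=1-\nu\Bigl(\bigcup_k(Y\setminus\Theta(E_k))\Bigr)=1-\nu\Bigl(Y\setminus\bigcap_k\Theta(E_k)\Bigr)=\nu\Bigl(\bigcap_k\Theta(E_k)\Bigr).
\end{equation*}

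The routine parts are the two-set inclusion--exclusion bookkeeping and the standard fact that countable additivity is insensitive to families that are only pairwise disjoint modulo null sets. The one point requiring genuine care — and the reason one cannot argue purely formally — is that an MP-map need not commute with set operations exactly: $\Theta(\bigcup_kE_k)$ and $\bigcup_k\Theta(E_k)$ agree only up to a $\nu$-null set, and there is no ``countably many sets'' version of \e{x43} to invoke directly. The disjointification step is exactly what lets one reduce \e{x42} to the exact two-set identity plus countable additivity; it is the place I expect the write-up to need the most care in tracking null sets.
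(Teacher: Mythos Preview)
Your argument is correct. The two-set intersection identity via inclusion--exclusion is exactly what the paper does, and your derivation of \e{x6} from it is equivalent to the paper's direct derivation of \e{x6} from the union identity \e{x43}.

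Where you diverge from the paper is in the passage to countable families. The paper simply observes that once the two-set intersection identity holds, it holds for any finite collection, and then takes a monotone limit (decreasing for \e{a0}, increasing for \e{x42}) using continuity of the finite measures $\mu,\nu$. That is shorter, but it tacitly uses that $\Theta(E_1\cap E_2)=\Theta(E_1)\cap\Theta(E_2)$ modulo a $\nu$-null set (needed to make the induction step go through), which follows from your monotonicity observation together with the equality of measures. Your route via disjointification for \e{x42} and De~Morgan for \e{a0} is longer but makes the null-set bookkeeping fully explicit --- precisely the point you flag at the end. Both approaches ultimately rest on the same fact, namely that $\Theta$ commutes with the Boolean operations up to $\nu$-null sets; the paper leaves this implicit and reaches the countable case in one line, while you unpack it.
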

\begin{proof}
	Using the definition of MP-map, we obtain
	\begin{align*}
		\mu(F)+\mu(E\setminus F)&=\mu(F\cup E)=\nu(\Theta(F)\cup\Theta(E))\\
		&=\nu(\Theta(F))+\nu(\Theta(E)\setminus\Theta(F))\\
		&=\mu(F)+\nu(\Theta(E)\setminus\Theta(F)).
	\end{align*}
	Thus \e{x6} follows. Similarly,
	\begin{align*}
		\mu(E)+\mu(F)-\mu(E\cap F)&=\mu(E\cup F)=\nu(\Theta(E)\cup \Theta(F)))\\
		&=\nu(\Theta(E))+ \nu(\Theta(F)))-\nu(\Theta(E)\cap \Theta(F)))\\
		&=\mu(E)+\mu(F)-\nu(\Theta(E)\cap \Theta(F))).
	\end{align*}
	This implies \e{a0} for two sets and so for a finite collection of measurable sets. Then passing to a limit we will get \e{a0} for any countable collection of sets. Similarly \e{x43} implies \e{x42}. 
\end{proof}

Let $\Theta$ be a MP-map from a probability space $(X,\ZA,\mu)$ to $(Y,\ZB,\nu)$. Then $\Theta$ induces a map that takes any indicator function $\ZI_E$, $E\in \ZA$, to $\ZI_{\Theta(E)}$. The following lemma gives a suitable extension of $\Theta$ to entire space $L^0(X)$ of measurable functions. 
\begin{lemma}\label{L6}
	If  $\Theta$ is a MP-map  from a probability space $(X,\ZA,\mu)$ to a probability space $(Y,\ZB,\nu)$, then it can be uniquely extended as an operator from $L^0(X)$ into $L^0(Y)$ such that  every sequence $\{f_k\}_{k\ge1}\subset L^0(X)$ is probabilistically equivalent to its image sequence $\{\Theta(f_k)\}_{k\ge 1}$.
\end{lemma}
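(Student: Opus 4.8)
The plan is to construct the extension in two stages — first on simple functions, directly from the set map $\Theta$, then by continuity with respect to convergence in measure — while tracking joint distributions throughout. The first thing I would do is record the Boolean properties of the set map $\Theta$ that \lem{L1} makes available. If $E\subset F$, applying \e{x6} to $F\setminus E$ and comparing with $\mu(F\setminus E)=\mu(F)-\mu(E)=\nu(\Theta(F))-\nu(\Theta(E))$ gives $\nu(\Theta(F)\setminus\Theta(E))=\nu(\Theta(F))-\nu(\Theta(E))$, i.e. $\Theta(E)\subset\Theta(F)$ up to a $\nu$-null set. If $E\cap F=\emptyset$, then $\nu(\Theta(E)\cap\Theta(F))=\mu(E\cap F)=0$ by \e{a0}. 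Combining these two facts with \e{x42}, a countable disjoint union $E=\bigsqcup_j E_j$ is sent to $\bigcup_j\Theta(E_j)$ up to a null set; in particular a finite measurable partition $X=\bigsqcup_\alpha A_\alpha$ is transported to a measurable partition $\{\Theta(A_\alpha)\}$ of $Y$ modulo null sets, with $\nu(\Theta(A_\alpha))=\mu(A_\alpha)$, and this transport is compatible with passing to refinements.

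Next, for simple functions $s_1,\dots,s_n$ on $X$ I would pick a finite measurable partition $X=\bigsqcup_\alpha A_\alpha$ refining all their level sets, with $s_i\equiv c_{i,\alpha}$ on $A_\alpha$, and set $\Theta(s_i):=\sum_\alpha c_{i,\alpha}\ZI_{\Theta(A_\alpha)}$. By the previous paragraph this is a well-defined element of $L^0(Y)$, is independent of the chosen refining partition (use common refinements), extends the indicator map $\ZI_E\mapsto\ZI_{\Theta(E)}$, and is linear on simple functions. Moreover, on $\Theta(A_\alpha)$ the vector $(\Theta(s_1),\dots,\Theta(s_n))$ takes the value $(c_{1,\alpha},\dots,c_{n,\alpha})$ that $(s_1,\dots,s_n)$ takes on $A_\alpha$, and $\nu(\Theta(A_\alpha))=\mu(A_\alpha)$, so the tuples $(s_i)$ and $(\Theta(s_i))$ have the same joint law; this is exactly \e{b8} for simple functions, since measurable rectangles form a $\pi$-system generating the Borel $\sigma$-algebra of $\ZR^n$. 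Specializing to $n=2$ yields the key estimate $\nu(|\Theta(s)-\Theta(t)|>\varepsilon)=\mu(|s-t|>\varepsilon)$ for all simple $s,t$ and all $\varepsilon>0$.

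The extension to all of $L^0(X)$ and the statement for general sequences are then soft. Given $f\in L^0(X)$, I choose simple $s_m\to f$ a.e.; then $(s_m)$ is Cauchy in measure, so by the last identity $(\Theta(s_m))$ is Cauchy in measure, hence converges in $L^0(Y)$ (complete for convergence in measure), and I define $\Theta(f)$ to be this limit. The same identity shows the limit does not depend on the approximating sequence and that $\nu(|\Theta(f)-\Theta(g)|>\varepsilon)=\mu(|f-g|>\varepsilon)$ for all $f,g$, so $\Theta$ is continuous for convergence in measure. For a sequence $\{f_k\}$ and a fixed $n$, choose simple $s_k^{(m)}\to f_k$ as $m\to\infty$ for $k=1,\dots,n$; for each $m$ the vectors $(s_1^{(m)},\dots,s_n^{(m)})$ and $(\Theta s_1^{(m)},\dots,\Theta s_n^{(m)})$ have equal joint laws, and as $m\to\infty$ the first converges in probability, hence in distribution, to $(f_1,\dots,f_n)$ and the second to $(\Theta f_1,\dots,\Theta f_n)$; since equality of laws passes to weak limits, these limiting vectors have equal joint law, which is \e{b8}. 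Uniqueness follows because applying \e{b8} to the tuple $(s,\ZI_{E_1},\dots,\ZI_{E_n})$ for a representation $s=\sum_i c_i\ZI_{E_i}$ forces any admissible extension to equal $\sum_i c_i\ZI_{\Theta(E_i)}$ on $s$, while the case $n=2$ of \e{b8} forces continuity in measure; density of simple functions then pins the extension down on all of $L^0(X)$.

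The only genuinely delicate point is the well-definedness modulo null sets of $\Theta$ on simple functions — that is, the fact that the set map behaves like a Boolean-algebra homomorphism up to null sets, transporting partitions to partitions and respecting refinements. This is precisely what \lem{L1} is designed to supply, and once it is in hand the remaining work is the standard completion argument for the space $L^0$ of measurable functions on a probability space.
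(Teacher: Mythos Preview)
Your proof is correct and follows essentially the same strategy as the paper: define $\Theta$ on simple functions via the set map, verify joint-distribution equality there using \lem{L1}, and extend by approximation. The paper's version is considerably terser (it leaves the Boolean compatibility of $\Theta$ with partitions and the independence of the approximating sequence to a ``standard argument'' and handles the limit via a.e.\ convergence rather than your Cauchy-in-measure completion), and it does not spell out uniqueness; your write-up fills in these details but does not depart from the paper's route.
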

\begin{proof}
	Let $s(x)$ be a simple function, i.e.
	\begin{equation}
		s(x)=\sum_{i=1}^m\alpha_i\ZI_{E_i}(x),
	\end{equation}
	where $E_i\in \ZA$, $i=1,2,\ldots,m $, are pairwise disjoint measurable sets and $\alpha_i\in \zB$. Then we define 
	\begin{equation}
		\Theta s(x)=\sum_{i=1}^m\alpha_i\ZI_{\Theta(E_i)}(x).
	\end{equation}
	For an arbitrary  $f\in L^0_\zB(X)$, one can find a sequence of simple functions $s_n(x)$, such that 
	\begin{equation}\label{a2}
		f(x)=\lim_{n\to\infty}s_n(x)\text{ a.e..}
	\end{equation}
	Using \lem{L1} and a standard argument one can observe that the sequences $s_n(x)$ and $\Theta s_n(x)$ are probabilistically equivalent. Consequently, $\Theta s_n(x)$ converges almost everywhere and the limit doesn't depend on the particular choice of simple functions $s_n$ in \e{a2}. We therefore define this limit as $\Theta f(x)$. It is also clear that any sequence $\{f_k\}_{k\ge1}\subset L^0_\zB(X)$ is probabilistically equivalent to its image sequence $\{\Theta(f_k)\}_{k\ge1}$.
\end{proof}

\section{A decomposition for discrete trigonometric systems}
For an integer $l\ge 1$ we denote $\ZN_l=\{0,1,\ldots ,l-1\}$ and let 
\begin{equation*}
	\ZN_{p_1,\ldots,p_d}= \ZN_{p_1}\times\cdots\times\ZN_{p_d}
\end{equation*}
be the Cartesian product of the sets $\ZN_{p_k}$, $k=1,\ldots, d$. 
The discrete trigonometric system (DTS) of order $l$ is the collection of $l$ orthonormal functions on $\ZT=\ZR/\ZZ$ defined by
\begin{equation}\label{x18}
	\ZD^{(l)}=\left\{t_n^{(l)}(x)=\sum_{k=0}^{l-1}\exp\left(2\pi i \frac{nk}{l}\right)\cdot \ZI_{\delta_k^{(l)}}(x):\,\quad n\in \ZN_l\right\},
\end{equation}
where $\delta_k^{(l)}=[k/l,(k+1)/l)$. We consider those as $1$-periodic functions on $\ZR$.  For integers $p_k\ge 2$, $k=1,2,\ldots,d$, consider the multiple DTS $\ZD^{(p_1,\ldots,p_d)}$, which is the tensor product of the one dimensional systems $\ZD^{(p_k)}$. This system consists of the functions
\begin{align*}
t_{\n}^{(p_1,\ldots,p_d)}(\x)&=\prod_{k=1}^dt_{n_k}^{(p_k)}(x_k)\\
&=\sum_{u_1=1}^{p_1}\cdots \sum_{u_d=1}^{p_d}\exp2\pi i\left( \frac{n_1u_1}{p_1}+\cdots +\frac{n_du_d}{p_d}\right)\cdot \ZI_{\delta_{u_1}^{(p_1)}\times \cdots\times \delta_{u_d}^{(p_d)}}(\x),\\
&\x=(x_1,\ldots,x_d)\in \ZT^d,\quad \n=(n_1,\ldots,n_d)\in \ZN_{p_1,\ldots,p_d},
\end{align*}
where $\delta_{u_1}^{(p_1)}\times \cdots\times \delta_{u_d}^{(p_d)}$ is the Cartesian product of the intervals $\delta_{u_k}^{(p_k)}$. In the sequel the notation $\{a\}$ denotes the fractional part of a number $a$. For positive integers $n$ and $p$ let denote by $\langle n\rangle_p$ the remainder when $n$ is divided by $p$. 
\begin{proposition}\label{P1}
	If $p_1,\ldots, p_d$ are mutually coprime numbers and $p=p_1\ldots p_d$, then the  systems $\ZD^{(p)}$ and $\ZD^{(p_1,\ldots,p_d)}$ are probabilistically equivalent. Moreover, this equivalence is generated by a measure-preserving map $\Theta:\ZT^p\to \ZT$.
\end{proposition}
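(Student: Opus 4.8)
The plan is to realize both systems as step functions that are constant on a partition of their underlying space into $p$ atoms of measure $1/p$, to match these atoms across the two spaces via the Chinese Remainder Theorem, and to transport one partition onto the other by an explicit measure-preserving map, matching frequencies by a companion bijection chosen so that the two families of step functions literally coincide. Concretely, writing $\delta^{(p)}_k=[k/p,(k+1)/p)$ for $k\in\ZN_p$ and $\Delta_{\u}=\delta^{(p_1)}_{u_1}\times\cdots\times\delta^{(p_d)}_{u_d}$ for $\u\in\ZN_{p_1,\dots,p_d}$, the function $t^{(p)}_n$ equals $e^{2\pi i nk/p}$ on $\delta^{(p)}_k$, and $t^{(p_1,\dots,p_d)}_{\n}$ equals $\exp\bigl(2\pi i\sum_{j}n_ju_j/p_j\bigr)$ on $\Delta_{\u}$; both $\{\delta^{(p)}_k\}_k$ and $\{\Delta_{\u}\}_{\u}$ are partitions into $p$ sets of measure $1/p$.

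Next I would set up the arithmetic core. Since $p_1,\dots,p_d$ are pairwise coprime with product $p$, the Chinese Remainder Theorem gives a group isomorphism $\beta\colon\ZN_p\to\ZN_{p_1,\dots,p_d}$, $\beta(k)=(\langle k\rangle_{p_1},\dots,\langle k\rangle_{p_d})$, which I use to match atoms, $\delta^{(p)}_k\leftrightarrow\Delta_{\beta(k)}$. For frequencies I take the bijection $\ZN_p\to\ZN_{p_1,\dots,p_d}$, $n\mapsto\n$, where $\n$ is the unique tuple with $n\equiv\sum_{j}n_j\,(p/p_j)\pmod p$ — this is again a bijection by CRT, using $\gcd(p/p_j,p_j)=1$. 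The key verification is that, under these matchings, the two step functions agree on corresponding atoms: since $\langle k\rangle_{p_j}\equiv k\pmod{p_j}$ one has $\sum_{j}n_j\langle k\rangle_{p_j}/p_j\equiv k\sum_{j}n_j/p_j\pmod1$, and the defining relation $\sum_j n_j/p_j\equiv n/p\pmod1$ then gives $\sum_j n_j\langle k\rangle_{p_j}/p_j\equiv nk/p\pmod1$ for every $k$. Hence the value of $t^{(p_1,\dots,p_d)}_{\n}$ on $\Delta_{\beta(k)}$ equals the value of $t^{(p)}_n$ on $\delta^{(p)}_k$.

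Then I would assemble the measure-preserving map. Fix once and for all a measure-preserving Borel map $g\colon[0,1)\to[0,1)^d$ (for instance binary-digit de-interleaving, $\sum_m b_m2^{-m}\mapsto\bigl(\sum_m b_{(m-1)d+j}2^{-m}\bigr)_{j=1}^{d}$), and define $\psi\colon\ZT\to\ZT^d$ by sending $y\in\delta^{(p)}_k$, written $y=(k+v)/p$ with $v\in[0,1)$, to the point of $\Delta_{\beta(k)}$ whose local coordinates are $g(v)$. Then $\psi$ is Borel and measure-preserving, and carries $\delta^{(p)}_k$ onto $\Delta_{\beta(k)}$ up to a null set. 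Setting $\Theta(E):=\psi^{-1}(E)$ for Borel $E\subseteq\ZT^d$ gives a measure-preserving map from $\ZT^d$ (the space on which $\ZD^{(p_1,\dots,p_d)}$ lives) to $\ZT$ in the sense of the definition, since conditions 1) and 2) follow immediately from the fact that $\psi^{-1}$ preserves Lebesgue measure and commutes with finite unions; its canonical extension furnished by \lem{L6} acts as $f\mapsto f\circ\psi$. Because $t^{(p_1,\dots,p_d)}_{\n}$ is constant on atoms and $\psi$ respects the atom matching, the computation of the previous paragraph gives $\Theta\bigl(t^{(p_1,\dots,p_d)}_{\n}\bigr)=t^{(p)}_n$ for every matched pair, and \lem{L6} then yields that $\ZD^{(p_1,\dots,p_d)}$, enumerated by $\n$, is probabilistically equivalent to $\ZD^{(p)}$, enumerated by the matched $n$; this $\Theta$ is the asserted map.

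I expect the only genuinely substantive point to be the Chinese-Remainder bookkeeping in the second paragraph — in particular, choosing the frequency bijection so that the step-function values match exactly rather than merely up to a permutation of atoms. Pairwise coprimality is indispensable there, since without it $\prod_j\ZZ/p_j$ is not cyclic and the two systems are not probabilistically equivalent at all. By contrast, the construction of $\psi$ is routine: any isomorphism of the two underlying Lebesgue spaces that respects the atom labelling will serve, and checking the defining properties of an MP-map against \lem{L1} is immediate.
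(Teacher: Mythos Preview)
Your proof is correct and follows essentially the same Chinese-Remainder-Theorem approach as the paper. The only cosmetic difference is that you apply the plain CRT bijection to the atoms and twist the frequency correspondence by the factors $p/p_j$, whereas the paper does the reverse (plain CRT map $\tau$ for frequencies, twisted map $\bar\tau(\u)=\langle\tau(\u)\sum_j p/p_j\rangle_p$ for atoms); either choice makes the step-function values on matched atoms coincide, and the paper, like you, then fills in the MP-map arbitrarily within each atom.
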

\begin{proof}
According to the Chinese remainder theorem for any integers  $n_j\in \ZN_{p_j}$, $j=1,2,\ldots,d$ there is a unique integer $l\in \ZN_{p}$ such that
\begin{align*}
	l=n_j\mod p_j,\quad j=1,2,\ldots,d.
\end{align*}
This defines a one-to-one mapping  $\tau:\ZN_{p_1,\ldots,p_d}\to \ZN_{p}$ such that $\tau(n_1,n_2,\ldots, n_d)=l$. Consider the mapping $\bar \tau:\sqcap_{j=1}^d\ZN_{p_j}\to \ZN_{p}$ defined by
\begin{equation*}
	\bar\tau(u_1,\ldots,u_d)=\left\langle\tau(u_1,\ldots,u_d)\sum_{j=1}^d\frac{p}{p_j}\right\rangle_p.
\end{equation*}
Observe that this is a one-to-one mapping. 
Since the sets $\sqcap_{j=1}^d\ZN_{p_j}$ and $\ZN_{p}$ both have cardinality $p$ it remains to show that  $\bar \tau$ is injective. For $ (u_1,\ldots,u_d)\neq (u'_1,\ldots,u'_d)$ we have
	\begin{align}
		&\left\langle\bar \tau (u_1,\ldots,u_d)-\bar \tau(u'_1,\ldots,u'_d)\right\rangle_p\\
		&\qquad\qquad\qquad\qquad =\left\langle	(\tau (u_1,\ldots,u_d)- \tau(u'_1,\ldots,u'_d))\sum_{j=1}^d\frac{p}{p_j}\right\rangle_p.\label{x1}
	\end{align}
	One can check that $\tau (u_1,\ldots,u_d)- \tau(u'_1,\ldots,u'_d)$ is not divisible by $p$, as well as  $\sum_{j=1}^d\frac{p}{p_j}$ and $p$ are coprime numbers. This implies that \e{x1} can not be $0$ and so $\bar \tau$ is injective. Using $\bar\tau$ we define a MP-map $\Theta:\ZT^d\to \ZT$ by choosing 
	\begin{equation*}
		\Theta(\delta_{u_1}^{(p_1)}\times \cdots\times \delta_{u_d}^{(p_d)})=\delta^{(p)}_u \text{ if } u=\bar \tau(u_1,\ldots,u_d),
	\end{equation*}
while inside of each $\delta_{u_1}^{(p_1)}\times \cdots\times \delta_{u_d}^{(p_d)}$ the map $\Theta$ is defined arbitrarily,  provided that the measure-preserving property is maintained. Using the definitions of $\tau$ and $\bar \tau$, we obtain
\begin{align*}
	&\left\{\frac{\tau(n_1,\ldots,n_d)\bar \tau(u_1,\ldots,u_d)}{p}\right\}\\
	&\qquad\qquad\qquad\qquad=	\left\{\frac{\tau(n_1,\ldots,n_d)\tau(u_1,\ldots,u_d)\sum_{j=1}^d\frac{p}{p_j}}{p}\right\}\\
	&\qquad\qquad\qquad\qquad=	\left\{\sum_{j=1}^d\frac{\tau(n_1,\ldots,n_d)\tau(u_1,\ldots,u_d)}{p_j}\right\}=\left\{\sum_{j=1}^d\frac{n_ju_j}{p_j}\right\}.
\end{align*}
Thus for $n=\tau(n_1,\ldots,n_d)$ and $u=\bar \tau (u_1,\ldots,u_d)$ we have
\begin{align*}
	\exp\left(2\pi i\cdot  \frac{nu}{p}\right)&=\exp\left(2\pi i\cdot  \frac{\bar\tau(n_1,\ldots,n_d) \tau(u_1,\ldots,u_d)}{p}\right)\\
	&=\prod_{j=1}^d\exp\left(2\pi i\cdot  \frac{n_j u_j}{p_j}\right).
\end{align*}
This implies that 
\begin{equation*}
	\Theta(t_{n_1,\ldots,n_d}^{(p_1,\ldots,p_d)})=t^{(p)}_n,\text{ whenever } n=\tau(n_1,\ldots,n_d). 
\end{equation*}
Therefore the systems $\ZD^{(p)}$ and $\ZD^{(p_1,\ldots,p_d)}$ are probabilistically equivalent.
\end{proof}
\section{Auxiliary lemmas and the proof of \trm{T1}}
Denote by $\zT_d$ the $d$-dimensional  trigonometric system, which consists of the functions
\begin{align}
	&t_{\n}(\x)=\exp(2\pi i(n_1x_1+\cdots +n_dx_d)),\text{ where }\\
	&\x=(x_1,\ldots,x_d)\in \ZT^d,  \quad \n=(n_1,\ldots,n_d)\in \ZZ^d.
\end{align}
The one-dimensional trigonometric system (when $d=1$) will be simply denoted by $\zT$.
\begin{definition}
	Let $(X,\ZA,\mu)$ and $(Y,\ZB,\nu)$ be probability spaces. Sequences of measurable functions $f_k:X\to \ZR$ and $g_k:Y\to \ZR$, $k=1,2,\ldots,n$, are said to be probabilistically equivalent with an error $\varepsilon>0$ if there is a sequence $\bar g_k:Y\to \ZR$, $k=1,2,\ldots,n$, probabilistically equivalent to $\{f_k\}$ such that $\|\bar g_k-g_k\|_2<\varepsilon$, $k=1,2,\ldots,n$. 
\end{definition}
\begin{lemma}\label{L3}
Let $f_k\in\zT_d$, $k=1,2,\ldots,n$ be an arbitrary choice of $d$-dimensional trigonometric functions. Then for any $\varepsilon>0$ there exist an integer $p$ and a sequence of discrete one-dimensional trigonometric functions $g_k\in  \ZD^{(p)}$, $k=1,2,\ldots,n$,  such that the sequences $\{f_k\}$ and $\{g_k\}$ are probabilistically equivalent with an error $\varepsilon$.
\end{lemma}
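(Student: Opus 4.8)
The plan is to bridge the continuous $d$-dimensional system and the discrete one-dimensional system in two steps. First I would discretize each $f_k$ in every coordinate, at the price of a small $L^2$-discrepancy --- exactly the sort of error that the notion ``probabilistically equivalent with an error'' is built to absorb. Then I would use \pro{P1} to collapse the resulting discrete \emph{multiple} system onto a discrete one-dimensional system with no further error, since that equivalence is exact.

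For the first step, write $f_k=t_{\n_k}$ with $\n_k=(n_k^{(1)},\ldots,n_k^{(d)})\in\ZZ^d$ and put $N=\max_k\max_{1\le j\le d}|n_k^{(j)}|$. The elementary estimate I would use is $|t_n^{(m)}(x)-\exp(2\pi inx)|\le 2\pi|n|/m$ for all $x$, which holds because on $\delta_j^{(m)}$ the function $t_n^{(m)}$ takes the value $\exp(2\pi in\lfloor mx\rfloor/m)$ and $|\lfloor mx\rfloor/m-x|\le 1/m$. Now choose $d$ distinct primes $p_1<p_2<\cdots<p_d$, all exceeding $\max\{2N,\,4\pi dN/\varepsilon\}$; they are automatically mutually coprime. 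Put
\[
	h_k(\x)=\prod_{j=1}^d t_{n_k^{(j)}}^{(p_j)}(x_j),
\]
which, after reducing each frequency $n_k^{(j)}$ modulo $p_j$, is one of the functions of the multiple DTS $\ZD^{(p_1,\ldots,p_d)}$ (and, since each $p_j>2N$, distinct $f_k$ yield distinct $h_k$). Since every factor has modulus $1$, telescoping gives $\|h_k-f_k\|_2\le\|h_k-f_k\|_\infty\le\sum_{j=1}^d 2\pi|n_k^{(j)}|/p_j<\varepsilon$.

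For the second step, apply \pro{P1} with $p=p_1\cdots p_d$: there is an MP-map $\Theta:\ZT^d\to\ZT$ realizing the exact probabilistic equivalence of $\ZD^{(p_1,\ldots,p_d)}$ and $\ZD^{(p)}$, and by \pro{P1} its $L^0$-extension from \lem{L6} sends each $h_k$ to a function $g_k:=\Theta(h_k)\in\ZD^{(p)}$. The decisive move is to feed the \emph{already-discretized} functions $f_k$ through the same map and set $\bar g_k:=\Theta(f_k)$. By \lem{L6}, the finite sequence $(f_1,\ldots,f_n,h_1,\ldots,h_n)$ is probabilistically equivalent to $(\bar g_1,\ldots,\bar g_n,g_1,\ldots,g_n)$; restricting to the first $n$ terms shows that $\{\bar g_k\}$ is probabilistically equivalent to $\{f_k\}$, while the fact that $(f_k,h_k)$ has the same joint distribution as $(\bar g_k,g_k)$ gives, via the map $(a,b)\mapsto|a-b|^2$, the identity $\|\bar g_k-g_k\|_2=\|f_k-h_k\|_2<\varepsilon$. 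Hence $\{g_k\}\subset\ZD^{(p)}$ is probabilistically equivalent to $\{f_k\}$ with error $\varepsilon$, which is the assertion.

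The one genuinely delicate point is the last step: one has to be sure that the map provided by \pro{P1} truly takes values in the one-dimensional torus and carries $\ZD^{(p_1,\ldots,p_d)}$ into $\ZD^{(p)}$ (so that the $g_k$ are legitimately discrete \emph{one}-dimensional trigonometric functions), and that pushing the perturbed $f_k$ --- not the $h_k$ --- through the $L^0$-extension does not spoil their closeness to the $g_k$; this is precisely where the distribution-preserving content of \lem{L6} is used. The only other thing to juggle is the tension between needing $p_1,\ldots,p_d$ mutually coprime (for \pro{P1}) and needing them large (for the discretization error), which is resolved painlessly by taking large primes.
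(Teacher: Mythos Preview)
Your proof is correct and follows the same two-step strategy as the paper: first approximate each $f_k$ by a multiple discrete trigonometric function with coprime moduli, then invoke \pro{P1} to identify the multiple DTS with a one-dimensional DTS. The paper chooses the coprime $p_j$ ``large enough'' without an explicit bound and simply asserts that the uniform estimate together with \pro{P1} yields probabilistic equivalence with error $\varepsilon$; your version is a bit more explicit, taking large primes and, importantly, spelling out the construction of the witness sequence $\bar g_k=\Theta(f_k)$ on $\ZT$ and the identity $\|\bar g_k-g_k\|_2=\|f_k-h_k\|_2$ via \lem{L6}. That last point is exactly the content the paper leaves implicit in the phrase ``This together with \e{x8} implies\ldots'', so your added care is well placed rather than a departure from the intended argument.
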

\begin{proof}
	 Let
	\begin{equation}\label{x5}
		f_k(\x)=t_{n_1^k,\ldots,n_d^k}(\x),\quad k=1,2,\ldots,n.
	\end{equation}
We will find $p$ in the form $p=p_1\cdots p_d$, where $p_k$ are mutually coprime integers. By choosing the numbers $p_j\ge \max_{1\le k\le n}n_j^k$ large enough, we may achieve a good approximation of functions  \e{x5} by the corresponding discrete trigonometric functions, that is
\begin{equation}\label{x8}
\left|t_{n_1^k,\ldots,n_d^k}(\x)-t^{(p_1,\ldots,p_d)}_{n_1^k,\ldots,n_d^k}(\x)\right|<\varepsilon \text{ for all }k=1,2,\ldots,n, \quad \x\in \ZT^d.
\end{equation}
Then applying \pro{P1}, we can say that the sequence $t^{(p_1,\ldots,p_d)}_{n_1^k,\ldots,n_d^k}$, $k=1,2,\ldots,n$, is probabilistically equivalent to a sequence of discrete one-dimensional trigonometric functions $g_k\in \ZD^{(p)}$. This together with \e{x8} implies that the sequences $\{f_k\}$ and $\{g_k\}$ are probabilistically equivalent with an error $\varepsilon>0$, completing the proof of lemma.
\end{proof}
Let $c_k(f)$, $k\in \ZZ$, be the Fourier coefficients of a function $f\in L^2(\ZT)$ with respect to one dimensional trigonometric system $\zT$ and denote the spectrum of $f$ by
\begin{equation*}
\spec(f)=\{k\in \ZZ:\, c_k(f)\neq 0\}.
\end{equation*}
\begin{lemma}\label{L2}
For every integer $l\ge  1$ the functions $t_n^{(l)}\in \ZD^{(l)}$, $n=0,1,\ldots,l-1$, have non-overlapping spectrums with respect to $\zT$. Moreover,  there are  non-overlapping one-dimensional trigonometric polynomials $g_k$, $k=0,1,\ldots,l-1$, each is a linear combination of $l$ trigonometric functions and 
\begin{equation*}
	\|g_n\|_{L^2(\ZT)}\le 1,\quad \|t_n^{(l)}-g_n\|_{L^2(\ZT)}\lesssim 1/\sqrt{l}, \quad n=0,1,\ldots,l-1.
\end{equation*}
\end{lemma}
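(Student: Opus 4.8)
The plan is to compute the Fourier coefficients of $t_n^{(l)}$ explicitly, read off its spectrum, and then take for $g_n$ a suitable truncation of the Fourier series of $t_n^{(l)}$.

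First, from the definition \e{x18}, for $n\in\ZN_l$ and $m\in\ZZ$ one has
\[
	c_m(t_n^{(l)})=\sum_{k=0}^{l-1}e^{2\pi i nk/l}\int_{k/l}^{(k+1)/l}e^{-2\pi i mx}\,dx .
\]
Evaluating the inner integral and summing the geometric progression $\sum_{k=0}^{l-1}e^{2\pi i(n-m)k/l}$, which equals $l$ when $m\equiv n\pmod l$ and vanishes otherwise, gives that $t_0^{(l)}\equiv 1$, so $\spec(t_0^{(l)})=\{0\}$, while for $1\le n\le l-1$
\[
	c_m(t_n^{(l)})=\frac{(e^{-2\pi i m/l}-1)\,l}{-2\pi i m}\ \ (m\equiv n\bmod l),\qquad c_m(t_n^{(l)})=0\ \ \text{otherwise}.
\]
Since $m\equiv n\not\equiv 0\pmod l$ forces $e^{-2\pi i m/l}\ne 1$, we obtain $\spec(t_n^{(l)})=n+l\ZZ$ for $1\le n\le l-1$. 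The sets $\{0\},\,1+l\ZZ,\dots,(l-1)+l\ZZ$ are pairwise disjoint, which proves the first assertion. Writing $m=n+jl$ one checks $|e^{-2\pi i m/l}-1|=2|\sin(\pi n/l)|$, independent of $j$, hence
\[
	|c_m(t_n^{(l)})|=\frac{l\,|\sin(\pi n/l)|}{\pi\,|m|},\qquad m\in n+l\ZZ\setminus\{0\}.
\]

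For the polynomials, set $g_0\equiv 1$ and, for $1\le n\le l-1$, let $S_n\subset n+l\ZZ$ be a set of $l$ elements of least absolute value (ties broken arbitrarily) and put $g_n=\sum_{m\in S_n}c_m(t_n^{(l)})e^{2\pi i mx}$. Then $\spec(g_n)\subset\spec(t_n^{(l)})$, so the $g_n$ remain non-overlapping, and each is a linear combination of at most $l$ trigonometric functions. Since $|t_n^{(l)}|\equiv 1$ we have $\|t_n^{(l)}\|_{L^2(\ZT)}=1$, so by Parseval's identity for $t_n^{(l)}$, $\|g_n\|_{L^2(\ZT)}^2=\sum_{m\in S_n}|c_m(t_n^{(l)})|^2\le 1$.

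It remains to estimate $\|t_n^{(l)}-g_n\|_{L^2(\ZT)}^2=\sum_{m\in(n+l\ZZ)\setminus S_n}|c_m(t_n^{(l)})|^2$. Because $n+l\ZZ$ is an arithmetic progression of step $l$ and $S_n$ consists of its $l$ terms closest to the origin, every omitted $m$ satisfies $|m|\gtrsim l^2$; combining this with the formula above and the elementary comparison $\sum_{m\in n+l\ZZ,\,|m|\ge M}m^{-2}\lesssim(lM)^{-1}$ yields
\[
	\|t_n^{(l)}-g_n\|_{L^2(\ZT)}^2\ \lesssim\ \frac{l^2\sin^2(\pi n/l)}{\pi^2}\cdot\frac{1}{l\cdot l^2}\ \lesssim\ \frac1l ,
\]
which is the claimed bound (it is $0$ for $n=0$). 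The only delicate points are the geometric-sum identity in the coefficient computation and the counting argument showing that $S_n$ is spread symmetrically enough about $0$ for the omitted frequencies to have size $\gtrsim l^2$; the finitely many small values of $l$ are absorbed into the implied constants. I expect this last bookkeeping to be the main — though entirely routine — obstacle.
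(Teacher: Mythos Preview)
Your proof is correct and follows essentially the same route as the paper: compute the Fourier coefficients of $t_n^{(l)}$, observe that they are supported on the residue class $n+l\ZZ$, and take $g_n$ to be a truncation of the Fourier series to $\sim l$ frequencies with the tail controlled by the $1/|m|$ (equivalently $1/|j|$) decay. The only cosmetic differences are that the paper indexes by $j$ and uses the cruder bound $|c_{n+lj}(t_n^{(l)})|\lesssim 1/(|j|+1)$ directly (so the tail $\sum_{|j|\ge l/2}j^{-2}\lesssim 1/l$ is immediate), whereas you work with the exact formula in $m$ and chase the constant through $|m|\gtrsim l^2$; you also treat $n=0$ separately, which the paper glosses over.
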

\begin{proof}
For the Fourier coefficients of the functions $t_n^{(l)}$ (see \e{x18}) we have
\begin{align}
	c_m(t_n^{(l)})&=\int_0^1t_n^{(l)}(x)\exp\left(-2\pi i mx\right)dx\\
	&=\sum_{k=0}^{l-1}\exp\left(2\pi i \frac{nk}{l}\right)\int_{k/l}^{(k+1)/l}\exp\left(-2\pi i mx\right)dx\\
	&=\exp\left(\frac{2\pi im}{l}\right)\sum_{k=0}^{l-1}\exp\left(2\pi i \frac{(n-m)k}{l}\right)\int_{0}^{1/l}\exp\left(2\pi i mx\right)dx\\
	&=\exp\left(\frac{2\pi im}{l}\right)\frac{\exp(2\pi im/l)-1}{2\pi im}\sum_{k=0}^{l-1}\exp\left(2\pi i \frac{(n-m)k}{l}\right).\label{x19}
\end{align}
If $m\neq n\mod l$ the sum in \e{x19} is zero and therefore $c_m(t_n^{(l)})=0$ . Thus we obtain $\spec(t_n^{(l)})=\{n+lj:\, j\in \ZZ\}$, $n=0,1,\ldots, l-1$, are non-overlapping. On the other hand, choosing $m=n+lj$ in \e{x19} we get 
\begin{equation}\label{x9}
	|c_{n+Nj}(t_n^{(l)})|\lesssim \frac{1}{|j|+1},\quad j\in \ZZ.
\end{equation}
Thus for the polynomials
\begin{equation*}
	g_n(x)=\sum_{|j|< l/2}c_{n+lj}(t_n^{(l)})\exp\left( 2\pi i (n+lj)x\right),\quad n=0,1,\ldots, l-1,
\end{equation*}
we get
\begin{equation*}
	\|t_n^{(l)}-g_n\|_2^2\lesssim \sum_{|j|\ge l/2}\frac{1}{j^2}\lesssim \frac{1}{l}.
\end{equation*}
This completes the proof of the lemma.
\end{proof}
\begin{lemma}\label{L4}
	Let $\{\phi_n(x)\}_{n=1}^\infty$ be an orthonormal system on $(0,1)$ and suppose that $w(n)$ is an increasing sequence satisfying $w(n)\ge c\log n$ for a constant $c>0$. Then $w(n)$ is a Weyl multiplier for  $\{\phi_n(x)\}$ if and only if under condition \e{a3},
\begin{equation}\label{x11}
	\lim_{k\to\infty}\max_{2^k\le m<2^{k+1}}\left|\sum_{n=2^k}^ma_n\phi_n(x)\right|=0\text{ a.e..}
\end{equation}
\end{lemma}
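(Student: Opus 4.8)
The plan is to read the statement as a reduction of the Weyl‑multiplier property of $w$ to convergence of the \emph{dyadic subsequence} of partial sums, and to prove the two implications separately. Write $S_N=\sum_{n=1}^N a_n\phi_n$ and note that for $2^k\le m<2^{k+1}$ the quantity inside \e{x11} is exactly $S_m-S_{2^k-1}$. The forward implication is soft: if $w(n)$ is a Weyl multiplier and $\{a_n\}$ satisfies \e{a3}, then $S_N\to S$ a.e.\ for some $S$, so for a.e.\ $x$ and every $\varepsilon>0$ there is $N_0$ with $|S_N(x)-S(x)|<\varepsilon$ for $N\ge N_0$; hence for $2^k\ge N_0$ and $2^k\le m<2^{k+1}$ one gets $|S_m(x)-S_{2^k-1}(x)|\le|S_m(x)-S(x)|+|S(x)-S_{2^k-1}(x)|<2\varepsilon$, which yields \e{x11}.

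For the converse, fix a sequence $\{a_n\}$ satisfying \e{a3}; the goal is to show $S_N$ converges a.e. Group the indices dyadically, $I_0=\{1\}$ and $I_k=\{2^k,\dots,2^{k+1}-1\}$ for $k\ge 1$, and set $D_k=\sum_{n\in I_k}a_n\phi_n$, so that $S_{2^K-1}=\sum_{k=0}^{K-1}D_k$. Since the $I_k$ are pairwise disjoint and $\{\phi_n\}$ is orthonormal, the $D_k$ are pairwise orthogonal with $\|D_k\|_2^2=\sum_{n\in I_k}a_n^2$; writing $b_k=\|D_k\|_2$, the normalized functions $\psi_k=D_k/b_k$ (over those $k$ with $b_k>0$) form an orthonormal system. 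The hypothesis $w(n)\ge c\log n$ enters here: since $w$ is increasing, $w(n)\ge w(2^k)\ge c k\log 2$ for $n\in I_k$, $k\ge 1$, so $\sum_{k\ge 1}k\,b_k^2\le(c\log 2)^{-1}\sum_n a_n^2 w(n)<\infty$; as $(\log k)^2=o(k)$ this forces $\sum_k b_k^2(\log k)^2<\infty$, and the Menshov--Rademacher theorem gives that $\sum_k b_k\psi_k=\sum_k D_k$ converges a.e., i.e.\ $S_{2^K-1}\to S$ a.e.\ for some $S$ as $K\to\infty$. Finally, for arbitrary $N$ pick $k=k(N)$ with $2^k\le N<2^{k+1}$; then $|S_N-S_{2^k-1}|\le\max_{2^k\le m<2^{k+1}}|S_m-S_{2^k-1}|\to 0$ a.e.\ by \e{x11}, and since $k(N)\to\infty$ we conclude $S_N\to S$ a.e. As $\{a_n\}$ was an arbitrary sequence satisfying \e{a3}, $w(n)$ is a Weyl multiplier.

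The substantive idea is confined to the converse: after dyadic blocking the weight $w(n)\gtrsim\log n$ on the original scale becomes a weight $\gtrsim k$ on the $k$‑th block, which comfortably dominates the Menshov--Rademacher threshold $(\log k)^2$ for the blocked orthonormal system, so the classical theorem does all the work; everything else is bookkeeping with partial sums. The two points needing care are verifying that $\{D_k/\|D_k\|_2\}$ is a genuine orthonormal system (disjointness of the $I_k$ together with orthonormality of $\{\phi_n\}$), so that Menshov--Rademacher truly applies, and noticing that the assumption $w(n)\ge c\log n$ is essential — without it one only obtains $\sum_k b_k^2<\infty$, and an orthogonal series with merely square‑summable coefficients may diverge everywhere.
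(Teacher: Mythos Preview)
Your proof is correct and follows essentially the same route as the paper. The only difference is cosmetic: the paper invokes a known result (Lemma~[5.3.2] in Kaczmarz--Steinhaus) to get a.e.\ convergence of the dyadic partial sums under $w(n)\gtrsim\log n$, whereas you supply that argument inline by blocking dyadically and applying Menshov--Rademacher to the orthonormal system $\{D_k/\|D_k\|_2\}$---which is precisely the standard proof of the cited lemma.
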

\begin{proof}
	Without loss of generality the condition $w(n)\ge c\log n$ can be replaced by 
	\begin{equation}\label{x10}
		w(n)\ge \log n. 
	\end{equation}
	Hence we can suppose that $w(n)$ satisfies \e{x10}. It is known that under the condition \e{x10} from \e{a3} it follows that
	\begin{equation}\label{x12}
		\lim_{k\to\infty}\sum_{n=1}^{2^{k}}a_n\phi_n(x)\text{ exists a.e..}
	\end{equation}
(see \cite{KaSt}, Lemma [5.3.2]). From \e{x11} and \e{x12} one can get the a.e. convergence of series \e{a1}. Hence, $w(n)$ is a Weyl multiplier for the system $\{\phi_n\}$. Now suppose conversely that
$w(n)$ satisfies \e{x10}  and it is a Weyl multiplier for $\{\phi_n\}$. Then under the condition \e{a3} series \e{a1} converges a.e., which immediately implies \e{x11}.
\end{proof}
\begin{proof}[Proof of \trm{T1}]
	Let $\{f_n\}_{n\ge 1}\subset \zT_d$ be an arbitrary choice of $d$-dimensional trigonometric functions. Applying first \lem{L3}, then \lem{L2}, for every $k$ we find a collection of non-overlapping one-dimensional trigonometric polynomials $ \{\bar f_n:\, 2^k<n\le 2^{k+1}\}$ such that each $\bar f_n$ is a linear combination of $4^k$ trigonometric functions, 
\begin{equation*}
		\|\bar f_n\|_2\le 1, \quad n=1,2,\ldots,
\end{equation*}
and the blocks 
	\begin{equation}\label{x14}
		\{f_n:\, 2^k\le n<2^{k+1}\} \text{ and } \{\bar f_n:\, 2^k\le n <2^{k+1}\},\quad k=0,1,2,\ldots
	\end{equation}
are  probabilistically equivalent with an errors $\lesssim 2^{-k}$. Note that some polynomials in $\{\bar f_n\}_{n\ge1}$ may have overlapping spectrums and it may only happen when those polynomials belong to different blocks. It is clear that there exist integers $n_k$ such that the new polynomial system defined by  
\begin{equation}\label{x13}
	g_n(x)=\bar f_n(x)\cdot e^{2\pi n_kx},\quad 2^k\le n <2^{k+1},
\end{equation}
are non-overlapping. Thus the polynomials $g_n$ can be written in the form 
\begin{equation}\label{x23}
	g_n(x)=\sum_{s=s_n+1}^{s_{n+1}}b_s \exp(2\pi i m_sx),\quad \sum_{s=s_n+1}^{s_{n+1}}b_s^2\le 1.
\end{equation}
where $m_k$, $k=1,2,\ldots$, is a sequence of different integers (not necessarily increasing) such that
\begin{equation}\label{x20}
s_1=0,\quad s_{n+1}-s_n= 4^k\text{ if }2^k\le n<2^{k+1}.
\end{equation}
Observe that from \e{x20} it follows that
\begin{equation}
	s_{n}\le 1+8+8^2+\cdots+8^k\text{ whenever }2^k\le n<2^{k+1}.
\end{equation}
Therefore we have
\begin{equation}\label{x21}
	s_n\le n^4.
\end{equation}
Now let $w(n)$ be a RC-multiplier for the trigonometric system.  Hence, according to the result of  \cite{Kar2} we can suppose that 
\begin{equation}\label{x22}
	w(n)>c\log n. 
\end{equation}
Let us show that $w(n)$ is a Weyl multiplier for $\{g_n\}_{n\ge1}$ as well. Let the a sequence $\{a_n\}$ satisfies \e{a3}.
By \e{x23} we may formally write
\begin{equation}\label{x7}
	\sum_{n=1}^\infty a_ng_n(x)=\sum_{n=1}^{\infty}\sum_{s=s_n+1}^{s_{n+1}}a_nb_s \exp(2\pi i m_sx),
\end{equation}
where the second sum can be considered as a series in a rearranged one-dimensional trigonometric system with the coefficients 
\begin{equation}
	c_s= a_nb_s\text{ if }  s_n<s\le s_{n+1}.
\end{equation}
From \e{x21} and the condition $w(n^2)\le Cw(n)$ it follows that $w(s_{n+1})\le Cw(n)$. Thus we obtain
\begin{align*}
	\sum_{s=1}^\infty c_s^2w(s)&=\sum_{n=1}^\infty a_n^2\sum_{s=s_n+1}^{s_{n+1}}b_s^2w(s)\\
	&\le C\sum_{n=1}^\infty a_n^2w(n)\sum_{s=s_n+1}^{s_{n+1}}b_s^2\\
	&\le C\sum_{n=1}^\infty a_n^2w(n)<\infty
\end{align*}
and therefore, by the assumption that $w(n)$ is a RC-multiplier for the trigonometric system, we get that both series in \e{x7} converge a.e..
Hence, we conclude that $w(n)$ is a Weyl multiplier for $\{g_n\}_{n\ge1}$. Taking into account \e{x22} we can apply \lem{L4}. Then, using also \e{x13}, we obtain
	\begin{equation}\label{x15}
		\lim_{k\to\infty}\max_{2^k\le m<2^{k+1}}\left|\sum_{n=2^k}^{m}a_n\bar f_n(x)\right|=\lim_{k\to\infty}\max_{2^k\le m<2^{k+1}}\left|\sum_{n=2^k}^{m}a_ng_n(x)\right|=0\text{ a.e..}
	\end{equation}
for any sequence $a_n$ satisfying \e{a3}. Since two systems in \e{x14} are probabilistically equivalent with an error $\lesssim 2^{-k}$, there exists an intermediate sequence $\{\bar{\bar f}_n:\, 2^k\le n<2^{k+1}\}\subset L^2(\ZT)$, which is probabilistically equivalent to $ \{f_n:\, 2^k\le n <2^{k+1}\}$ and $\|\bar{\bar f}_n-\bar f_n\|_2\le 2^{-k}$. This implies 
\begin{align}
	\max_{2^k\le m<2^{k+1}}\left|\sum_{n=2^k}^{m}a_n f_n(x)\right|&=\max_{2^k\le m<2^{k+1}}\left|\sum_{n=2^k}^{m}a_n\bar {\bar f}_n(x)\right|\\
	&\le \max_{2^k\le m<2^{k+1}}\left|\sum_{n=2^k}^{m}a_n\bar f_n(x)\right|+\delta_k(x)\label{x16}
\end{align}
where 
\begin{equation}\label{x17}
	\|\delta_k\|_2^2\le \sum_{n=2^k}^{2^{k+1}-1}a_n^2\cdot \sum_{n=2^k}^{2^{k+1}-1}\|\bar f_n-\bar{\bar f}_n\|_2^2\le 2^{-k}\cdot \sum_{n=2^k}^{2^{k+1}-1}a_n^2.
\end{equation}
Hence, it becomes clear that $\delta_k(x)\to 0$ a.e.. Combining this with \e{x15} and \e{x16} , we obtain 
\begin{equation*}
	\lim_{k\to\infty}\max_{2^k\le m<2^{k+1}}\left|\sum_{n=2^k}^{m}a_n f_n(x)\right|=0.
\end{equation*}
Then once again applying \lem{L4} we conclude that $w(n)$ is a Weyl multiplier for the system $\{f_n\}_{n\ge 1}$.

Now consider the part of the theorem concerning the SRC-multipliers. In this case, we use a simplified version of the argument employed in the previous part and omit the details. Let $\{f_n\}_{n\ge 1}$ be an arbitrary orthonormal system of non-overlapping $d$-dimensional trigonometric polynomials.  Applying Lemmas \ref{L3} and \ref{L2}, for every $k$ we find a collection of non-overlapping one-dimensional trigonometric polynomials $ \{\bar f_n:\, 2^k\le n< 2^{k+1}\}$ such that $\|\bar f_n\|_2\le 1$,  $n=1,2,\ldots,$ and the systems \e{x14}
are  probabilistically equivalent with an errors $\lesssim 2^{-k}$. Note that in this case the number of representation terms in the polynomials $\bar f_n$ is not important.
Then similarly we define the sequence \e{x13}. If $w(n)$ is a SRC-multiplier for the trigonometric system, then it is Weyl multiplier for the polynomial system $\{g_n\}_{n\ge 1}$. Thus, applying \lem{L4}, we can immediately write \e{x15}. Then continuing exactly the same procedure after \e{x15} we conclude that $w(n)$ is a Weyl multiplier for our arbitrarily chosen non-overlapping polynomial system $\{f_n\}_{n\ge 1}$. This completes the proof of the theorem.
\end{proof}

\begin{bibdiv}
	\begin{biblist}

\bib{Boch}{article}{
	author={Bo\v{c}karev, S. V.},
	title={Rearrangements of Fourier-Walsh series},
	language={Russian},
	journal={Izv. Akad. Nauk SSSR Ser. Mat.},
	volume={43},
	date={1979},
	number={5},
	pages={1025--1041, 1197},
	issn={0373-2436},
	review={\MR{552550}},
}
\bib{Boch2}{article}{
	author={Bo\v{c}karev, S. V.},
	title={A majorant of the partial sums for a rearranged Walsh system},
	language={Russian},
	journal={Dokl. Akad. Nauk SSSR},
	volume={239},
	date={1978},
	number={3},
	pages={509--510},
	issn={0002-3264},
	review={\MR{0487239}},
}
\bib{Seroj}{article}{
	author={Galstyan, S. Sh.},
	title={Convergence and unconditional convergence of Fourier series},
	language={Russian},
	journal={Dokl. Akad. Nauk},
	volume={323},
	date={1992},
	number={2},
	pages={216--218},
	issn={0869-5652},
	translation={
		journal={Russian Acad. Sci. Dokl. Math.},
		volume={45},
		date={1992},
		number={2},
		pages={286--289 (1993)},
		issn={1064-5624},
	},
	review={\MR{1191534}},
}
\bib{Gev}{article}{
	author={Gevorkyan, G. G.},
	title={On Weyl factors for the unconditional convergence of series in the
		Franklin system},
	language={Russian},
	journal={Mat. Zametki},
	volume={41},
	date={1987},
	number={6},
	pages={789--797, 889},
	issn={0025-567X},
	review={\MR{904246}},
}
\bib{Gev1}{article}{
	author={Gevorkyan, G. G.},
	title={On Weyl factors for the unconditional convergence of series in the
		Cicielskii system},
	language={Russian},
	journal={Mat. Zametki},
	volume={116},
	date={2024},
	number={5},
	pages={707--713},
}
\bib{KaSt}{book}{
	author={Ka\v{c}ma\v{z}, S.},
	author={\v{S}te{\i}ngauz, G.},
	title={Teoriya ortogonalnykh ryadov},
	language={Russian},
	publisher={Gosudarstv. Izdat. Fiz.-Mat. Lit., Moscow},
	date={1958},
	pages={507 pp. (1 insert)},
	review={\MR{0094635}},
}
\bib{KaKa}{article}{
	author={Kamont, Anna},
	author={Karagulyan, Grigori A.},
	title={On wavelet polynomials and Weyl multipliers},
	journal={J. Anal. Math.},
	volume={150},
	date={2023},
	number={2},
	pages={529--545},
	issn={0021-7670},
	review={\MR{4645048}},
	doi={10.1007/s11854-023-0281-4},
}
\bib{Kar1}{article}{
	author={Karagulyan, G. A.},
	title={On systems of non-overlapping Haar polynomials},
	journal={Ark. Math.},
	volume={58},
	number={1},
	date={2020},
	pages={121--131},
	doi={10.4310/arkiv.2020.v58.n1.a8}
}
\bib{Kar2}{article}{
	author={Karagulyan, G. A.},
	title={On Weyl multipliers of the rearranged trigonometric system}
	journal={Sbornik Mathematics},
	volume={211},
	number={12},
	date={2020}
	pages={1704-1736}
	doi={10.1070/SM9422}
}
\bib{Kar3}{article}{
	author={Karagulyan, G. A.},
	title={A sharp estimate for the majorant norm of a rearranged trigonometric system},
	journal={Russian Math. Surveys},
	volume={75},
	number={3},
	date={2020},
	pages={569-571},
	doi={10.1070/RM9946}
}
\bib{Kar5}{article}{
	author={Karagulyan, G. A.},
	title={On maximal Calder\'on-Zygmund operators and Weyl multipliers}
	journal={Sbornik Mathematics},
	volume={216},
	number={10},
	date={2025}
}

\bib{KaSa}{book}{
	author={Kashin, B. S.},
	author={Saakyan, A. A.},
	title={Orthogonal series},
	series={Translations of Mathematical Monographs},
	volume={75},
	note={Translated from the Russian by Ralph P. Boas;
		Translation edited by Ben Silver},
	publisher={American Mathematical Society, Providence, RI},
	date={1989},
	pages={xii+451},
	isbn={0-8218-4527-6},
	review={\MR{1007141}},
}
\bib{Kol}{article}{
	author={Kolmogoroff, A.},
	author={Menchoff, D.},
	title={Sur la convergence des s\'{e}ries de fonctions orthogonales},
	language={French},
	journal={Math. Z.},
	volume={26},
	date={1927},
	number={1},
	pages={432--441},
	issn={0025-5874},
	review={\MR{1544864}},
	doi={10.1007/BF01475463},
}
\bib{Men}{article}{
	author={Menshov, D. E.},
	title={Sur les series de fonctions orthogonales I},
	language={Russian},
	journal={Fund. Math.},
	volume={4},
	date={1923},
	pages={82--105},
	
}
\bib{Mor}{article}{
	author={M\'{o}ricz, Ferenc},
	title={On the convergence of Fourier series in every arrangement of the
		terms},
	journal={Acta Sci. Math. (Szeged)},
	volume={31},
	date={1970},
	pages={33--41},
	issn={0001-6969},
	review={\MR{271617}},
}
\bib{Nak1}{article}{
	author={Nakata, Sabur\^{o}},
	title={On the divergence of rearranged Fourier series of square
		integrable functions},
	journal={Acta Sci. Math. (Szeged)},
	volume={32},
	date={1971},
	pages={59--70},
	issn={0001-6969},
	review={\MR{0435711}},
}
\bib{Nak2}{article}{
	author={Nakata, Sabur\^{o}},
	title={On the divergence of rearranged trigonometric series},
	journal={Tohoku Math. J. (2)},
	volume={27},
	date={1975},
	number={2},
	pages={241--246},
	issn={0040-8735},
	review={\MR{407519}},
	doi={10.2748/tmj/1178240990},
}
\bib{Nak3}{article}{
	author={Nakata, Sabur\^{o}},
	title={On the unconditional convergence of Walsh series},
	language={English, with Russian summary},
	journal={Anal. Math.},
	volume={5},
	date={1979},
	number={3},
	pages={201--205},
	issn={0133-3852},
	review={\MR{549237}},
	doi={10.1007/BF01908903},
}
\bib{Nak5}{article}{
	author={Nakata, Sabur\^{o}},
	title={On the divergence of rearranged Walsh series},
	journal={Tohoku Math. J. (2)},
	volume={24},
	date={1972},
	pages={275--280},
	issn={0040-8735},
	review={\MR{340941}},
	doi={10.2748/tmj/1178241538},
}
\bib{Ole}{article}{
	author={Olevski{\i}, A. M.},
	title={Divergent Fourier series},
	language={Russian},
	journal={Izv. Akad. Nauk SSSR Ser. Mat.},
	volume={27},
	date={1963},
	pages={343--366},
	issn={0373-2436},
	review={\MR{0147834}},
}
\bib{Pol}{article}{
	author={Pole\v{s}\v{c}uk, S. N.},
	title={On the unconditional convergence of orthogonal series},
	language={English, with Russian summary},
	journal={Anal. Math.},
	volume={7},
	date={1981},
	number={4},
	pages={265--275},
	issn={0133-3852},
	review={\MR{648491}},
	doi={10.1007/BF01908218},
}
\bib{Rad}{article}{
	author={Rademacher, Hans},
	title={Einige S\"{a}tze \"{u}ber Reihen von allgemeinen Orthogonalfunktionen},
	language={German},
	journal={Math. Ann.},
	volume={87},
	date={1922},
	number={1-2},
	pages={112--138},
	issn={0025-5831},
	review={\MR{1512104}},
	doi={10.1007/BF01458040},
}
\bib{Tan2}{article}{
	author={Tandori, K\'{a}roly},
	title={Beispiel der Fourierreihe einer quadratisch-integrierbaren
		Funktion, die in gewisser Anordnung ihrer Glieder \"{u}berall divergiert},
	language={German},
	journal={Acta Math. Acad. Sci. Hungar.},
	volume={15},
	date={1964},
	pages={165--173},
	issn={0001-5954},
	review={\MR{161082}},
	doi={10.1007/BF01897034},
}
\bib{Tan3}{article}{
	author={Tandori, K\'{a}roly},
	title={\"{U}ber die Divergenz der Walshschen Reihen},
	language={German},
	journal={Acta Sci. Math. (Szeged)},
	volume={27},
	date={1966},
	pages={261--263},
	issn={0001-6969},
	review={\MR{208265}},
}
\bib{Uly1}{article}{
	author={Ul\cprime yanov, P. L.},
	title={Weyl factors for unconditional convergence},
	language={Russian},
	journal={Mat. Sb. (N.S.)},
	volume={60 (102)},
	date={1963},
	pages={39--62},
	review={\MR{0145265}},
}

\bib{Uly6}{article}{
	author={Ul\cprime yanov, P. L.},
	title={Divergent Fourier series},
	language={Russian},
	journal={Uspehi Mat. Nauk},
	volume={16},
	date={1961},
	number={3 (99)},
	pages={61--142},
	issn={0042-1316},
	review={\MR{0125398}},
}
\bib{Uly7}{article}{
	author={Ul\cprime yanov, P. L.},
	title={Divergent Fourier series of class $Lp\,(p\geq 2)$},
	journal={Soviet Math. Dokl.},
	volume={2},
	date={1961},
	pages={350--354},
	issn={0197-6788},
	review={\MR{0119026}},
}
\bib{Uly3}{article}{
	author={Ul\cprime yanov, P. L.},
	title={Weyl multipliers for the unconditional convergence of orthogonal
		series},
	language={Russian},
	journal={Dokl. Akad. Nauk SSSR},
	volume={235},
	date={1977},
	number={5},
	pages={1038--1041},
	issn={0002-3264},
	review={\MR{0450886}},
}
\bib{Uly4}{article}{
	author={Ul\cprime yanov, P. L.},
	title={Exact Weyl factors for unconditional convergence},
	language={Russian},
	journal={Dokl. Akad. Nauk SSSR},
	volume={141},
	date={1961},
	pages={1048--1049},
	issn={0002-3264},
	review={\MR{0132966}},
}
\bib{Uly5}{article}{
	author={Ul\cprime yanov, P. L.},
	title={Solved and unsolved problems in the theory of trigonometric and
		orthogonal series},
	language={Russian},
	journal={Uspehi Mat. Nauk},
	volume={19},
	date={1964},
	number={1 (115)},
	pages={3--69},
	issn={0042-1316},
	review={\MR{0161085}},
}
\bib{Uly8}{article}{
	author={Ul\cprime yanov, P. L.},
	title={A. N. Kolmogorov and divergent Fourier series},
	language={Russian},
	journal={Uspekhi Mat. Nauk},
	volume={38},
	date={1983},
	number={4(232)},
	pages={51--90},
	issn={0042-1316},
	review={\MR{710115}},
}
\bib{Uly9}{article}{
	author={Ul\cprime yanov, P. L.},
	title={The work of D. E. Men\cprime shov on the theory of orthogonal series and
		its further development},
	language={Russian, with Russian summary},
	journal={Vestnik Moskov. Univ. Ser. I Mat. Mekh.},
	date={1992},
	number={4},
	pages={8--24, 101},
	issn={0579-9368},
	translation={
		journal={Moscow Univ. Math. Bull.},
		volume={47},
		date={1992},
		number={4},
		pages={8--20},
		issn={0027-1322},
	},
	review={\MR{1215456}},
}
\bib{Zag}{article}{
	author={Zahorski, Zygmunt},
	title={Une s\'{e}rie de Fourier permut\'{e}e d'une fonction de classe $L^{2}$
		divergente presque partout},
	language={French},
	journal={C. R. Acad. Sci. Paris},
	volume={251},
	date={1960},
	pages={501--503},
	issn={0001-4036},
	review={\MR{147833}},
}

\end{biblist}
\end{bibdiv}

\end{document}